\definecolor{darkblue}{rgb}{0,0,.7}
\newlist{alphenum}{enumerate}{1}
\setlist[alphenum]{fullwidth,label={(\alph*)}}
\theoremstyle{definition}
\newtheorem{theorem}{Theorem}[section]
\newtheorem{remark}[theorem]{Remark}
\newtheorem{lemma}[theorem]{Lemma}
\newtheorem{algorithm}[theorem]{Algorithm}
\numberwithin{figure}{section}
\numberwithin{table}{section}
\numberwithin{equation}{section}
\newcommand{\vecb}[1]{\boldsymbol{#1}}
\newcommand{\jump}[1]{[\![{#1}]\!]}
\renewcommand{\div}{\mathrm{div}}
\renewcommand{\hat}[1]{{\widehat{#1}}}
\newcommand{\mesh}{\mathcal{T}}
\newcommand{\facets}{\mathcal{F}}
\newcommand{\rr}{\mathbb{R}}
\newcommand{\cM}{{c_M}}
\newcommand{\uhdg}{{\vecb{u}_h^{hdg}}}
\newcommand{\uhdiv}{{\vecb{u}_h^{div}}}
\newcommand{\varrhohdg}{{\varrho_h^{hdg}}}
\newcommand{\varrhodiv}{{\varrho_h^{div}}}
\begin{document}
\date{\today}

\title[]{Gradient-robust hybrid DG discretizations for the
compressible Stokes equations}

\author{P.L.~Lederer}
\address{Department of Applied Mathematics, University of Twente,
Hallenweg 19, 7522NH  Enschede, Netherlands}
\email{p.l.lederer@utwente.nl}

\author{C.~Merdon}
\address{
Weierstrass Institute for Applied Analysis and
Stochastics,\\ Mohrenstr. 39, 10117 Berlin, Germany}
\email{christian.merdon@wias-berlin.de}

\maketitle

\begin{abstract}
  This paper studies two hybrid discontinuous Galerkin (HDG) discretizations for the velocity-density formulation of the compressible Stokes
  equations with respect to several desired structural properties, namely provable convergence, the preservation of
  non-negativity and mass constraints for the density, and gradient-robustness.
  The later property dramatically enhances the accuracy in well-balanced situations, such as the hydrostatic balance where
  the pressure gradient balances the gravity force.
  One of the studied schemes employs an $H(\mathrm{div})$-conforming velocity ansatz space which ensures all mentioned
  properties, while a fully discontinuous method is shown to satisfy all properties but the gradient-robustness.
  Also higher-order schemes for both variants are presented and compared in three numerical benchmark problems. 
  The final example shows the importance also for non-hydrostatic well-balanced states
  for the compressible Navier--Stokes equations.

  \medskip\noindent
  \textbf{Keywords:} compressible Stokes equations, hybrid discontinuous Galerkin methods, well-balanced schemes, gradient-robustness
\end{abstract}

\section{Introduction}
For incompressible flows the concept of pressure-robustness
characterizes discretizations that allow for a priori velocity error estimates
that are independent of the pressure and the viscosity parameter.
Otherwise the scheme can suffer from a severe locking phenomenon
\cite{JLMNR:sirev,cmame:linke:merdon:2016,MR3780790,GLS:2019}. A lack of pressure-robustness
can be avoided by using divergence-free schemes, e.g., \cite{SV:1983, FN:2013, GN:2014, 
guzman:et:al:2017, MR2304270, MR3833698, MR3826676, JLMR2022},
or, alternatively, non pressure-robust classical discretizations can be 'repaired' by applying
$H(\mathrm{div})$-conforming reconstruction operators at critical spots
\cite{Linke:2012, LMT:2016, cmame:linke:merdon:2016, LLMS:2017}.

Here, we consider the non-conservative form of the compressible Stokes model problem that
seeks a velocity $\vecb{u}$ and a non-negative density $\varrho$
with a mass constraint such that
\begin{align}\label{eqn:intro_modelproblem}
        -\nu \Delta \vecb{u} + \nabla p(\varrho) & = \varrho \vecb{g} + \vecb{f},\\
        \mathrm{div}(\varrho \vecb{u})
        & = 0,
\end{align}
for a given equation of state, e.g.\ the ideal gas law $p(\varrho) = \cM \varrho$.
Here, $\nu$ is the viscosity and $c_M$ is a constant related to the
inverse of the squared Mach number and the (assumed constant) temperature.
In \cite{akbas2020} the authors,
inspired by \cite{gal-09-conv} and \cite{MR3749377}, extended the
concept of pressure-robustness to the compressible Stokes equations
and connected it with the concept of well-balanced schemes. As in the
incompressible case, dominant gradient fields in the momentum balance
can appear, and methods that do not suffer from this are coined
gradient-robust (since the gradient force could be also balanced by
$\nabla(\mathrm{div}\vecb{u})$ and not only by the pressure). In the
compressible setting, well-balanced states beyond $\nabla p(\varrho) =
\nabla \vecb{f}$ can appear, in particular $\nabla p(\varrho) =
\varrho \vecb{g}$ in presence of the gravity term or other
conservative forces. More well-balanced and non-hydrostatic states are
possible, e.g., when including the convection term
$\mathrm{div}(\varrho \vecb{u} \otimes \vecb{u})$ or the geostrophic
balance in presence of the Coriolis force $2 \varrho (\Omega \times
\vecb{u})$. It is non-trivial for numerical schemes to preserve these
states accurately.

There are several approaches in the literature to design well-balanced schemes, 
mostly in the context of hyperbolic conservation laws and model problems
like the shallow water equations with bottom topography or the Euler equations with gravity,
see e.g.\ \cite{doi:10.1137/0733001,doi:10.1137/S1064827503431090,MICHELDANSAC2016568,CiCP-30-666,GROSHEINTZLAVAL2020109805}
and references therein.
A popular approach in these references is a certain modification of the source term
based on a hydrostatic reconstruction, i.e.\ a transformation to a set of variables
that stays constant in the well-balanced state. An equivalent
strategy from \cite{berberich2020high}
requires a sufficiently accurate representation of the well-balanced state
and then computes the deviations from this state.

In \cite{akbas2020} the gradient-robustness property
of a scheme was identified as one important ingredient for well-balancedness
on general meshes.
To do so, an inf-sup stable Bernardi--Raugel finite element method was coupled 
with a finite-volume method for the continuity equation. Moreover, a reconstruction
operator that preserves the discrete divergence of the test function was employed in the
gravity term,
ensuring that the discretely divergence-free part of the solution is really divergence-free
and therefore orthogonal onto gradient forces. This was the key ingredient to ensure
gradient-robustness and therefore a certain well-balancedness.
The scheme also ensures the non-negativity constraint for the density and guaranteed convergence
and is asymptotic-preserving in the sense that it converges
to a pressure-robust scheme for the incompressible Stokes equations
if the Mach number goes to zero or, equivalently, if $\cM$ goes to infinity.
In \cite{MaoXue23} an unconditional error estimate for the pressure of that scheme
for the semi-stationary compressible Stokes problem
and a similar discrete scheme was shown. However, here some additional stabilization
terms in the continuity equation were added which unfortunately
compromise the gradient-robustness.

As for pressure-robust and divergence-free methods for incompressible
flows, the concept of gradient-robustness is based on discrete exact
sequences or De Rham complexes, which ensure the structure-preserving
features of the method. Another identical concept is the
framework of compatible ($H(\mathrm{div})$-conforming) FEM, see e.g.\
\cite{cotter:thuburn:2014} where it is applied to the Euler equations
and shallow water equations. In the present paper,
hybrid discontinuous Galerkin schemes are explored that avoid the
introduction of a reconstruction operator as in \cite{akbas2020} and
straightforwardly allow for higher order schemes. Note, that an
extension of the model problem \eqref{eqn:intro_modelproblem} to a
model with the full elasticity tensor $-\nu \mathrm{div} (\mathbb{C}
\varepsilon (\vecb{u})) $ is straightforward and requires an
additional Korn inequality to hold. For the discontinuous Galerkin
methods discussed here the necessary estimates can be found in
\cite{BrennerKorn2014}, or in the context of mixed FEM for linear
elasticity \cite{https://doi.org/10.48550/arxiv.2206.14610}.

Two variants of the hybrid discontinuous Galerkin (HDG) methods are
studied and their lowest order versions are shown to converge and
preserve non-negativity and mass constraints on general meshes.
Although the main line of arguments is similar to \cite{akbas2020,
gal-09-conv}, adaptations to the DG context are needed. Moreover a
sharper stability estimate with respect to the gravity force
$\vecb{g}$ is provided.

The
first variant discretizes the velocity field in an
$H(\mathrm{div})$-conforming Brezzi--Douglas--Marini (BDM) space, which
allows that the discretely divergence-free part of the velocity is
exactly divergence-free. Therefore, it is perfectly orthogonal on any
gradient in the momentum balance. Thus, no
$H(\mathrm{div})$-conforming interpolation as in \cite{akbas2020} is needed,
but requires a different discretization of the diffusive term instead.
In the following let $q, \psi$ be smooth scalar fields. By
$L^2$-orthogonality of divergence-free velocity fields and gradients,
$\vecb{f} = \nabla q$ yields a well-balanced discrete solution with
$\vecb{u}=\vecb{0}$. Also a gravity-related balanced state with $\varrho
\vecb{g} = \varrho \nabla \psi = \nabla q$ is approximated much better
than without gradient-robustness. However, the discretization of
$\varrho$ by $\varrho_h$ generates a small perturbation $(\varrho -
\varrho_h) \vecb{g}$ that may not be fully irrotational and therefore
may cause an imbalance and spurious oscillations that scale with
$1/(c_M\nu)$. This was also observed in \cite{akbas2020}. The second
variant of the HDG method also relaxes the
$H(\mathrm{div})$-conformity and therefore the divergence-constraint
of the velocity is formulated in the spirit of the DG versions from
\cite{di2011mathematical}.  While this also allows for a provably
converging and non-negativity-preserving scheme, the relaxation of the
divergence-constraint compromises the gradient-robustness and, in
consequence, also the well-balancedness.
Numerical examples confirm in which situations the $H(\mathrm{div})$-conforming
scheme is superior, namely for low Mach numbers and small $\nu$.
Moreover, the last example demonstrates the importance of gradient-robustness
for non-hydrostatic well-balanced states like
$\varrho \mathrm{div}(\vecb{u} \otimes \vecb{u}) + \nabla p(\varrho) = \vecb{0}$
in the compressible Navier--Stokes setting, where the convection term
can be a gradient.


\medskip
The rest of the paper is structured as follows. Section~\ref{sec:Preliminaries}
introduces the model problem and basic notation and concepts. Section~\ref{sec:grobustHDG}
introduces the gradient-robust HDG scheme. Section~\ref{sec:stability_existence}
proves stability and existence of discrete solutions. Section~\ref{sec:convergence}
shows convergence of the gradient-robust scheme. Section~\ref{sec:fullyDG}
shortly discusses the fully discontinuous variant and the necessary modifications
to the stability and convergence proof. Section~\ref{sec:numerics} compares
both variants in three numerical examples with a focus on the benefits of
gradient-robustness.

\section{Preliminaries}\label{sec:Preliminaries}
In the following and for the rest of this work we consider a Lipschitz
domain $\Omega \subset \mathbb{R}^d$ with $d = 2$ or $d=3$. For a
subset $\omega \subseteq \Omega$ we use $( \bullet, \bullet)_\omega$
to denote the $L^2$ inner product on~$\omega$, with $\| \bullet
\|^2_\omega = ( \bullet, \bullet)_\omega$. For $\omega = \Omega$ we
omit the subscript, i.e. use  $( \bullet, \bullet)_\Omega =( \bullet, \bullet)$ and $\| \bullet \|_\Omega = \| \bullet \|$. We
employ standard notation of Sobolev spaces and use bold symbols for
their vector valued versions, e.g. $H^1(\Omega)$ and
$\vecb{H}^1(\Omega) = [H^1(\Omega)]^d$ for the first order Sobolev
spaces in one and $d$ dimensions, respectively. Moreover, we use the
common notation $H(\mathrm{div}, \Omega)$ (i.e. without a bold symbol)
to denote the (vector-valued) Sobolev space of functions whose
weak-divergence is in $L^2$. Finally note that we make use of a zero
index to denote a vanishing trace on $\partial \Omega$ of the
corresponding (continuous) trace operator. 

\subsection{The compressible Stokes model problem}
Let $\vecb{g} \in \vecb{L}^\infty(\Omega)$ be a given gravity force, and
additionally, for conceptual purposes, consider a second force $\vecb{f} \in
\vecb{L}^2(\Omega)$.

The weak formulation of the compressible Stokes equations seeks
$\vecb{u} \in \vecb{V} := \vecb{H}^1_0(\Omega)$
and $\varrho \in Q := L^2(\Omega)$ such that
\begin{subequations}\label{eqn:continuous_problem}
\begin{align}
    a(\vecb{u}, \vecb{v}) + b(\varrho, \vecb{v}) 
    & = (\varrho \vecb{v}, \vecb{g}) + (\vecb{v}, \vecb{f})
    && \text{for all } \vecb{v} \in \vecb{V}, \label{eqn:continuous_problem_a}\\
    c(\varrho, \vecb{u}, \lambda) 
    & = 0
    && \text{for all } \lambda \in W^{1, \infty}(\Omega),\label{eqn:continuous_problem_b}
\end{align}
\end{subequations}
where
\begin{align*}
    a(\vecb{u},\vecb{v}) & := \nu (\nabla \vecb{u}, \nabla \vecb{v}),\\
    b(\varrho, \vecb{v}) & := -(p(\varrho), \div (\vecb{v})),\\
    c(\varrho, \vecb{v}, \lambda) & := (\varrho \vecb{v}, \nabla \lambda).
\end{align*}
Throughout the paper we assume a linear  equation of state and a mass constraint given by
\begin{align} \label{eq::stateandmass}
    p(\varrho) = \cM \varrho, \quad \textrm{and} \quad (\varrho,1) = M,
\end{align}
where $M > 0$ and $\cM$ are constants. The later can be considered as
the squared inverse of the Mach number, i.e., $\cM \approx
{M\!a}^{-2}$.

\subsection{Gradient forces and hydrostatic/well-balanced solutions}
\label{sec:gradient_forces}
This section is concerned with a proper characterization of
gradient-robustness and well-balancedness.
Both concepts are related to gradient fields in the momentum balance.

For the incompressible Stokes problem one observes that any gradient force
$\vecb{f} = \nabla q$, with a given potential $q$, leads to a hydrostatic solution $\vecb{u} \equiv \vecb{0}$
and a pressure $\nabla p = \nabla q$ that fully balances $\vecb{f}$.
A numerical method that preserves this was coined pressure-robust \cite{cmame:linke:merdon:2016,JLMNR:sirev}.
The correct balancing of the gradient force $\nabla q$ exploits the $L^2$-orthogonality of
divergence-free functions on $\nabla q$, i.e.,
\begin{align*}
    (\nabla q, \vecb{v}) = - (q, \mathrm{div} (\vecb{v})) = 0, \qquad \text{for all } \vecb{v} \in \vecb{V}_0 := \lbrace \vecb{v} \in \vecb{V} : \mathrm{div} (\vecb{v} )= 0 \rbrace.
\end{align*}

In the present compressible setting given by \eqref{eqn:continuous_problem}
a similar hydrostatic balance is possible, namely
\begin{align}\label{eq:hydrostatic_balance}
    \nabla p(\varrho) = \varrho \vecb{g} = \varrho \nabla \psi.
\end{align}
This situation appears, e.g., in an atmosphere-at-rest-scenario
and might be considered equivalent to the lake-at-rest scenario in
shallow water equations with bottom topography \cite{MICHELDANSAC2016568}.
A discrete scheme that correctly balances gradient forces $\psi$ and computes
hydrostatic solutions with $\vecb{u} = \vecb{0}$ in these cases is called well-balanced.

For the equation of state $p(\varrho) = \cM \varrho$ the hydrostatic balance
can be reformulated to
\begin{align}\label{eq:hydrostatic_balance_isothermal}
    \nabla p(\varrho) = \cM \varrho \nabla (\log \varrho) = \varrho \vecb{g} = \varrho \nabla \psi.
\end{align}
This yields (uniformly positive) solutions of the form
$\varrho := \varrho_0 \exp(\psi/\cM)$ where the constant $\varrho_0$ is chosen such that the mass constraint
is satisfied.

It is non-trivial for a discrete scheme to compute hydrostatic solutions in this case
without using a priori information.
Indeed, one could subtract the exact solution
from the equation and compute a deviation density, in the spirit of, e.g., \cite{berberich2020high}.
However, in more complex situations, e.g. other forces, multi-physics or
boundary conditions or different equations of state $p(\varrho)$, analytical solutions might
be unavailable. Hence, here we are interested in an out-of-the-box scheme that is as accurate as possible without
a priori modifications.

The purpose of the forcing $\vecb{f}$ in \eqref{eqn:continuous_problem} is
to better explain the importance of gradient-robustness as an important
ingredient for well-balancedness. To this end let us consider a gradient force
$\vecb{f} = \nabla q$ and the hydrostatic balance
\begin{align}\label{eq:hydrostatic_balance2}
    \nabla p(\varrho) = \vecb{f} = \nabla q.
\end{align}
Due to the non-negativity and mass constraint, this balance (and therefore
a hydrostatic solution) is only satisfied if one can choose a constant
$C$ such that the density is given by 
\begin{align*}
    \varrho := (q - C)/\cM,
\end{align*}
and at the same time stays non-negative and satisfies $((q - C)/\cM, 1) = M$.
This is only possible if $q$ is small enough or $M$ is large enough and
such forces $\vecb{f}$ are called admissible, see also \cite[Lemma 4.3]{akbas2020}
for a motivation.

\smallskip
To summarize we consider these two qualities of well-balancedness for this model problem:
\begin{itemize}
\item a scheme for \eqref{eqn:continuous_problem} is said to be gradient-robust if it admits a hydrostatic solution whenever
$\vecb{g} = \vecb{0}$ and $\vecb{f}$ is an admissible gradient force;
\item a scheme for \eqref{eqn:continuous_problem}
is said to be well-balanced if it admits a hydrostatic solution whenever $\vecb{f} = \vecb{0}$ and $\vecb{g}$
is a gradient force.
\end{itemize}

\begin{remark}
    If $\varrho$ is the exact solution for given $\vecb{g} = \nabla \psi$ and $M > 0$, then $\vecb{f} := \varrho \nabla \psi$
    is always an admissible force. This follows by the calculation above. A comparison of the
    two force terms implies that the lack of well-balancedness of
    a gradient-robust scheme is therefore caused by or determined by the non-irrotational part of $(\varrho - \varrho_h) \nabla \psi$,
    where $\varrho_h$ is the density approximation of the scheme, whereas the irrotational part
    of that quantity is treated correctly by a gradient-robust scheme.
\end{remark}

As established in \cite{akbas2020}, gradient-robustness needs a correct balancing of
divergence-free forces and gradient forces via structural properties of
$H(\mathrm{div})$-conforming
finite element spaces, namely the $L^2$-orthogonality of divergence-free functions
and gradients like the force terms discussed above.

\section{A Gradient-robust HDG scheme}\label{sec:grobustHDG}
This section discusses a hybrid discontinuous Galerkin (HDG) discretization for the
compressible Stokes equation where the discrete velocity $\vecb{u}_h$ is
$H(\mathrm{div})$-conforming. This implies
gradient-robustness and asymptotic convergence
to a pressure-robust discretization of the incompressible Stokes problem
when the Mach number tends to zero $M\!a \rightarrow 0$ .

\subsection{Notation}
Consider a regular triangulation $\mathcal{T}$ of $\Omega$ into
simplices. The set of vertices is given by $\mathcal{N}$ and the sets
of faces by $\mathcal{F}$. For simplification we further assume that
$\mathcal{T}$ is quasi uniform and use $h_T :=
\operatorname{diam}(T)$, $h_F := \operatorname{diam}(F)$ for $T \in
\mathcal{T}$ and $F \in \mathcal{F}$ and define $h:=\max\limits_{T \in
\mathcal{T}} h_T$. Due to quasi uniformity we have $h \approx h_T
\approx h_F$. 
 
On a face $F \in \facets$ we define a unit vector normal $\vecb n_F$ with an
arbitrary but fixed orientation. Note, that the (fixed) orientation of
$\vecb n_F$ also defines the orientation of the jump operator
$\jump{\cdot}$, e.g. let $F = T_1 \cap T_2$ for two elements $T_1$ and
$T_2$ and fix $\vecb n_F$ to point from $T_1$ to $T_2$, then we have on $F$
\begin{align*}
    \jump{q} := q_{h,1} - q_{h,2}, \quad \textrm{with} \quad q_{h,i} := q_h|_{T_i}, \textrm{ for } i = 1,2.
\end{align*}
On the domain boundary and on boundaries $\partial T$ for $T \in
\mathcal{T}$ we use $\vecb n$ to denote the outward pointing normal
vector. Further, we define the tangential projection for a function
$\vecb v$ by $\vecb v_t := \vecb v - (\vecb v \cdot \vecb n) \vecb n$.
Further, on the domain boundary we have $\vecb n$ = $\vecb n_F$, and
the jump operator equals the identity.  We denote by
$P^k(\omega)$ the set of polynomials on $\omega \subset \Omega$ of
total order $k$, and again use bold symbols to denote the corresponding vector-valued versions.


\subsection{The $H(\mathrm{div})$-HDG scheme}

Consider the finite element spaces
\begin{align*}
    \vecb{V}_h &:= \vecb{P}_{k}(\mesh) \cap H_0(\div, \Omega),\\
    \widehat{\vecb{V}}_h &:= \left\{\widehat{\vecb{v}}_h \in \vecb{L}^2(\facets): \widehat{\vecb{v}}_h|_F \in \vecb{P}_{k}(F), \widehat{\vecb{v}}_h \cdot \vecb{n}_F = 0 ~\forall F \in \facets,  \widehat{\vecb{v}}_h = 0 \textrm{ on } \partial \Omega \right\},\\
    Q_h &:= P_{k-1}(\mesh).
\end{align*}
Here, $\vecb{V}_h$ is the $H(\mathrm{div})$-conforming BDM space of order $k$ which is used as velocity ansatz space, and 
$Q_h$ is the discontinuous ansatz space for the pressure and
density. The hybridization space $\widehat{\vecb{V}}_h$ is used to couple the discontinuous tangential parts of discrete velocities in $\vecb{V}_h$ in a hybrid DG fashion. Thus, $\widehat{\vecb{V}}_h$ can be seen as as the ansatz space for the tangential
traces of velocities on the skeleton $\mathcal{F}$.

The suggested discrete scheme seeks $\left((\vecb{u}_h, \widehat{\vecb{u}}_h), \varrho_h\right)
\in (\vecb{V}_h \times \widehat{\vecb{V}}_h) \times Q_h$ such that
\begin{subequations}\label{eqn:discrete_scheme}
\begin{align}
    a_h((\vecb{u}_h, \widehat{\vecb{u}}_h),(\vecb{v}_h, \widehat{\vecb{v}}_h)) 
    + b(p(\varrho_h), \vecb{v}_h)
     & = F_h(\vecb{v}_h) + G_h(\varrho_h, \vecb{v}_h), \label{eqn:discrete_scheme_abd}\\
     c_h(\varrho_h, \vecb{u}_h, \lambda_h) & = 0, \label{eqn:discrete_scheme_c}\\
     (\varrho_h,1) &= M, \label{eqn:discrete_scheme_e}
\end{align}
\end{subequations}
for all $(\vecb{v}_h, \widehat{\vecb{v}}_h) \in \vecb{V}_h \times
\widehat{\vecb{V}}_h$ and $\lambda_h \in Q_h$. Here, the forms are
defined by
\begin{align*}
    a_h((\vecb{u}_h, \widehat{\vecb{u}}_h),(\vecb{v}_h, \widehat{\vecb{v}}_h)) & := 
    \nu \sum_{T \in \mathcal{T}} (\nabla \vecb{u}_h, \nabla \vecb{v}_h)_T 
    +(\nabla \vecb{u}_h  \vecb{n}, (\widehat{\vecb{v}}_h - \vecb{v}_h)_t)_{\partial T}\\
    &\hspace{1cm} + (\nabla \vecb{v}_h \vecb{n}, (\widehat{\vecb{u}}_h - \vecb{u}_h)_t)_{\partial T} 
    + \frac{\alpha k^2}{h} ((\widehat{\vecb{u}}_h - \vecb{u}_h)_t, (\widehat{\vecb{v}}_h - \vecb{v}_h)_t)_{\partial T}\\
    c_h(\varrho_h, \vecb{u}_h, \lambda_h) 
    &:=-\sum_{T \in \mathcal{T}} (\varrho_h \vecb{u}_h, \nabla \lambda_h)_T
    +  (\vecb{u}_h\cdot \vecb{n} \varrho_h^{up}, \lambda_h)_{\partial T}, \\
    G_h(\varrho_h, \vecb{v}_h) & :=
    (\vecb{g}, \varrho_h \vecb{v}_h),\\
    F_h(\vecb{v}_h) & :=
    (\vecb{f}, \vecb{v}_h),
\end{align*}
where $\varrho_h^{up}$ is the standard upwind value associated to
$\vecb{u}_h$. The vanishing Dirichlet value of the velocity was
incorporated in an essential (direct) manner for the normal part
via $\vecb{V}_h$, and in a weak (DG-like) manner for the tangential
part via $\widehat{\vecb{V}}_h$.
As usual for (hybrid) DG methods, the parameter $\alpha
> 0$ has to be chosen sufficiently large enough. In our numerical
examples we always choose $\alpha = 10$. For a comparison and
discussion on DG and HDG methods we refer to the literature, e.g.
\cite{zbMATH05837531, cockburn2009unified, L_MTH_2010}. Note that an
alternative formulation of the diffusive fluxes would be possible by
means of a mixed formulation which is inherently stable (without
choosing an $\alpha>0$). See \cite{MR4637970} for more details and a
comparison with respect to the stabilization parameter. For the sake of simplicity we do
not use this formulation in this work. 
The proposed scheme of this work is similar to the one in \cite{akbas2020}, but
instead of an $H^1$-conforming Bernardi--Raugel method a hybrid
$H(\div)$-DG approach is employed. A stability and convergence proof
is given in Sections~\ref{sec:stability_existence} and
\ref{sec:convergence}.


The analysis involves the usual HDG norm
\begin{align} \label{eq::hdgnorm}
    \| (\vecb{u}_h, \hat {\vecb u}_h) \|^2_{1,h} 
    &:= \sum\limits_{T \in \mesh} \| \nabla \vecb u_h \|_T^2 + \frac{1}{h_T} \| (\vecb u_h - \hat{\vecb u}_h)_t \|_{\partial T}^2.
\end{align}
Moreover, we recall that the chosen velocity and pressure spaces allow for
an LBB-condition with respect to the space
\begin{align*}
    Q_h \cap L^2_0(\Omega) := \Bigl\{\lambda \in L^2(\Omega): \int_\Omega \lambda = 0 \Bigr\},
\end{align*}
which can be found for example in \cite{MR3867384}. This is one
important ingredient for the stability analysis of the scheme.

\begin{lemma}[LBB-stability Stokes]\label{lem:LBBstabilityStokes}
    There exists some $\beta > 0$ independent of $h$,  such that for
    all $\lambda_h \in Q_h \cap
    L^2_0(\Omega)$, we have
    \begin{align*}
        \sup_{\substack{(\vecb{v}_h, \hat{\vecb{v}}_h) \in {\vecb V}_h \times \hat{\vecb V}_h\\ (\vecb{v}_h,  \hat{\vecb v}_h) \neq (\vecb{0}, \vecb{0})}}
        \frac{b(\lambda_h, \vecb{v}_h)}{\| (\vecb{v}_h, \hat{ \vecb{v}}_h) \|_{1,h} } \geq \beta \| \lambda_h \|.
    \end{align*}
\end{lemma}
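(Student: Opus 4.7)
The plan is to construct, for given $\lambda_h \in Q_h \cap L^2_0(\Omega)$, an explicit test pair $(\vecb{v}_h, \hat{\vecb{v}}_h) \in \vecb{V}_h \times \hat{\vecb{V}}_h$ that realises the inf-sup ratio up to a constant. I would start from the continuous Stokes inf-sup condition on $\Omega$: since $\lambda_h$ has vanishing mean, there exists $\vecb{v} \in \vecb{H}_0^1(\Omega)$ satisfying $\mathrm{div}\,\vecb{v} = -\lambda_h$ together with the stability bound $\|\vecb{v}\|_{H^1(\Omega)} \leq C_\Omega \|\lambda_h\|$. Then I would apply the canonical BDM interpolation operator $\Pi^{\text{BDM}}:\vecb{H}^1_0(\Omega)\to \vecb{V}_h$ and set $\vecb{v}_h := \Pi^{\text{BDM}}\vecb{v}$. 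The crucial commuting property of this interpolant is that $\mathrm{div}\,\Pi^{\text{BDM}}\vecb{v} = \pi_{Q_h}\mathrm{div}\,\vecb{v} = -\lambda_h$, because $\lambda_h \in Q_h$ is already fully represented. This immediately yields
\begin{equation*}
    b(\lambda_h,\vecb{v}_h) = -c_M(\lambda_h, \mathrm{div}\,\vecb{v}_h) = c_M \|\lambda_h\|^2,
\end{equation*}
which provides the desired lower bound on the numerator.

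For the hybrid component I would choose $\hat{\vecb{v}}_h$ as the facet-wise $L^2$-projection of the (single-valued) tangential trace $\vecb{v}_t$ onto $\hat{\vecb{V}}_h$. The HDG norm then splits into the two standard contributions. The volumetric part $\sum_T\|\nabla\vecb{v}_h\|_T^2$ is controlled by the $\vecb{H}^1$-stability of the BDM interpolant, giving $\|\nabla\Pi^{\text{BDM}}\vecb{v}\|_T \lesssim \|\vecb{v}\|_{H^1(\tilde T)}$ on the element patch $\tilde T$. For the jump part I would insert $\vecb{v}_t$ into $(\vecb{v}_h - \hat{\vecb{v}}_h)_t = (\vecb{v}_h - \vecb{v})_t - (\hat{\vecb{v}}_h - \vecb{v}_t)$ and estimate both pieces by the $\vecb{L}^2$-approximation properties of $\Pi^{\text{BDM}}$ and of the facet $L^2$-projection in $\hat{\vecb{V}}_h$; combined with the trace inequality each contribution is bounded by $h_T^{1/2}\|\vecb{v}\|_{H^1(T)}$, so that after the factor $1/h_T$ we retrieve $\|\vecb{v}\|_{H^1(T)}^2$. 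Summing over the mesh yields $\|(\vecb{v}_h,\hat{\vecb{v}}_h)\|_{1,h} \leq C \|\vecb{v}\|_{H^1(\Omega)} \leq C C_\Omega \|\lambda_h\|$, and combining the two estimates gives the claim with $\beta = c_M/(C C_\Omega)$.

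The argument is otherwise fairly routine; the only slightly delicate step is making sure the projection defining $\hat{\vecb{v}}_h$ is compatible with the constraint $\hat{\vecb{v}}_h \cdot \vecb{n}_F = 0$ built into $\hat{\vecb{V}}_h$. Since we only project the \emph{tangential} part of $\vecb{v}$, and since $\vecb{v}\in\vecb{H}^1_0(\Omega)$ vanishes on $\partial\Omega$, both the normal-orthogonality on interior facets and the homogeneous boundary condition for $\hat{\vecb{v}}_h$ are automatically satisfied, so this potential obstacle does not cause any genuine difficulty. Alternatively, the statement could simply be cited from the HDG literature on BDM-based schemes (e.g., along the lines of \cite{MR3867384}), where exactly this inf-sup condition in the norm \eqref{eq::hdgnorm} is established.
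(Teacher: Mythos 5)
Your proof is correct, and it is worth noting that the paper itself does not prove this lemma at all: the inf-sup condition is stated as a recalled fact and justified solely by the citation \cite{MR3867384}, which is exactly the fallback you mention in your last sentence. What you have written out is the standard Fortin argument underlying that citation: solve the continuous divergence problem with $\vecb{H}^1_0$-stability (Bogovskii/Ne\v{c}as, valid on Lipschitz domains), apply the canonical BDM interpolant whose commuting property $\div\,\Pi^{\mathrm{BDM}}\vecb{v} = \Pi_{Q_h}\div\,\vecb{v}$ gives $\div\,\vecb{v}_h = -\lambda_h$ exactly, since $\div$ maps $\vecb{V}_h$ onto $Q_h$ and $\lambda_h \in Q_h$, and then control the HDG norm \eqref{eq::hdgnorm} through local interpolation, trace and projection estimates. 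The delicate points are handled correctly: $\Pi^{\mathrm{BDM}}$ preserves the vanishing normal trace, so $\vecb{v}_h \in H_0(\div,\Omega)$; the facet $L^2$-projection of $\vecb{v}_t$ stays tangential because $\vecb{n}_F$ is constant on the planar facets of a simplicial mesh; and $\vecb{v} \in \vecb{H}^1_0(\Omega)$ forces $\hat{\vecb{v}}_h = 0$ on $\partial\Omega$. Two cosmetic remarks. First, the BDM interpolant is local to each element, so the patch $\tilde{T}$ in your $H^1$-stability bound is unnecessary; element-wise estimates suffice. Second, the factor $\cM$ in your numerator (and hence in your $\beta = \cM/(C C_\Omega)$) is an artifact of the paper's definition $b(\varrho,\vecb{v}) = -(p(\varrho),\div\,\vecb{v})$, which the paper itself applies inconsistently (it later writes $b(p(\varrho_h),\vecb{v}_h)$ and, in the proof of Theorem~\ref{thm:stability}, treats the first argument of $b$ as a pressure); you should state explicitly which convention you adopt --- with $b(\lambda,\vecb{v}) = -(\lambda,\div\,\vecb{v})$ your construction yields a $\beta$ that is independent of $\cM$ as well as of $h$. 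In short: your argument is correct, self-contained, and strictly more informative than the paper's citation-only treatment.
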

 
\begin{lemma}[Gradient-robustness]
    The scheme \eqref{eqn:discrete_scheme} is gradient-robust.
    
    \begin{proof}
        Consider an admissable gradient force $\vecb{f} = \nabla q$ (in the sense of
        Section~\ref{sec:gradient_forces}) and the
        incompressible Stokes problem: seek $(\vecb{u}^0_h, \widehat{\vecb{u}}^0_h)$ with
        $\mathrm{div} (\vecb{u}^0_h) = 0$ and (up to an arbitrary global constant $C$) some $p_h \in Q_h$ such that
        \begin{align}\label{eqn:incompressible_stokes_subproblem}
            a_h((\vecb{u}^0_h, \widehat{\vecb{u}}^0_h),(\vecb{v}_h, \widehat{\vecb{v}}_h))
            - (p_h, \mathrm{div} (\vecb{v}_h)) & = (\nabla q, \vecb{v}_h) = -(q, \mathrm{div} (\vecb{v}_h)).
        \end{align}
        Testing with $(\vecb{v}_h, \widehat{\vecb{v}}_h) = (\vecb{u}^0_h, \widehat{\vecb{u}}^0_h)$
        yields
        \begin{align*}
            \| (\vecb{u}^0_h, \widehat{\vecb{u}}^0_h) \|^2_{1,h} = 0.
        \end{align*}
        Standard Stokes pressure estimates show $p_h := \Pi_{Q_h} q +
        C$, where $\Pi_{Q_h}$ is the $L^2$-projection onto $Q_h$. Due
        to the admissibility assumption of $\vecb{f}$ we can then find a
        constant $C$ such that $\varrho_h := p_h + C > 0$ also
        satisfies the mass constraint. Hence, $(\vecb{u}_h,
        \widehat{\vecb{u}}_h) = (\vecb{u}^0_h, \widehat{\vecb{u}}^0_h)
        = (\vecb{0},\vecb{0}) $ and $\varrho_h$ also solves
        \eqref{eqn:discrete_scheme}, thus the scheme admits a
        hydrostatic solution.
    \end{proof}
\end{lemma}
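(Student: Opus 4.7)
The plan is to exhibit a hydrostatic discrete solution explicitly, using the ansatz $(\vecb{u}_h, \widehat{\vecb{u}}_h) = (\vecb{0}, \vecb{0})$ and constructing a compatible density $\varrho_h$ from the potential $q$ of the admissible gradient force $\vecb{f} = \nabla q$. With this ansatz, every term of $a_h$ vanishes since each is linear in the discrete velocity, and the upwind form $c_h(\varrho_h, \vecb{0}, \lambda_h)$ is identically zero, so \eqref{eqn:discrete_scheme_c} holds for any $\varrho_h$. The task thus reduces to finding $\varrho_h \in Q_h$ with $(\varrho_h, 1) = M$ and $\varrho_h \geq 0$ satisfying
\begin{equation*}
    b(p(\varrho_h), \vecb{v}_h) = (\nabla q, \vecb{v}_h) \qquad \text{for all } \vecb{v}_h \in \vecb{V}_h.
\end{equation*}

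Next I would exploit the $H(\mathrm{div})$-conformity of $\vecb{V}_h$. Since $\vecb{v}_h \cdot \vecb{n}$ is single-valued across interior faces and vanishes on $\partial \Omega$, a global integration by parts gives $(\nabla q, \vecb{v}_h) = -(q, \mathrm{div}(\vecb{v}_h))$ with no surviving skeleton remainder. Using $p(\varrho_h) = \cM \varrho_h$, the momentum equation becomes
\begin{equation*}
    (\cM \varrho_h - q, \mathrm{div}(\vecb{v}_h)) = 0 \qquad \text{for all } \vecb{v}_h \in \vecb{V}_h.
\end{equation*}
Because $\mathrm{div}(\vecb{v}_h) \in Q_h$, this is equivalent to $\cM \varrho_h - \Pi_{Q_h} q$ being $L^2$-orthogonal to $\mathrm{div}(\vecb{V}_h)$. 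Since the BDM divergence operator is onto $Q_h \cap L^2_0(\Omega)$ (which is the content underlying Lemma~\ref{lem:LBBstabilityStokes}), this forces $\cM \varrho_h - \Pi_{Q_h} q$ to be a global constant, so the construction
\begin{equation*}
    \varrho_h := \frac{\Pi_{Q_h} q + C}{\cM}
\end{equation*}
solves the momentum balance for every real $C$.

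Finally I would fix $C$ using the admissibility of $\vecb{f}$. The mass constraint $(\varrho_h, 1) = M$ pins down $C$ uniquely, and by the very definition of admissibility recalled after \eqref{eq:hydrostatic_balance2}, this same $C$ makes $\varrho_h$ non-negative. I expect the subtle point to be not any single calculation but the simultaneous fulfilment of both density constraints: the argument works precisely because admissibility is tailored to guarantee this compatibility once the pressure has been matched to the potential. Combining everything, $((\vecb{0}, \vecb{0}), \varrho_h)$ satisfies \eqref{eqn:discrete_scheme}, which is the desired hydrostatic discrete solution.
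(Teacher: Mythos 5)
Your proposal is correct, and while it lands in exactly the same place as the paper's proof, it gets there by a more direct route. The paper sets up the auxiliary discrete incompressible Stokes subproblem \eqref{eqn:incompressible_stokes_subproblem}, tests with the solution itself to get $\| (\vecb{u}^0_h, \widehat{\vecb{u}}^0_h) \|_{1,h} = 0$, and then appeals to ``standard Stokes pressure estimates'' to identify $p_h = \Pi_{Q_h} q + C$. You instead posit the zero-velocity ansatz from the outset and identify the density explicitly: the relation $(\cM \varrho_h - q, \div(\vecb{v}_h)) = 0$ for all $\vecb{v}_h$, combined with the surjectivity of $\div : \vecb{V}_h \to Q_h \cap L^2_0(\Omega)$, forces $\cM \varrho_h - \Pi_{Q_h} q$ to be constant. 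This replaces the paper's unreferenced appeal to standard pressure estimates by a two-line orthogonality argument and cleanly isolates the two structural ingredients: the $H(\div)$-conformity with vanishing normal boundary trace, which makes the integration by parts $(\nabla q, \vecb{v}_h) = -(q, \div(\vecb{v}_h))$ exact with no skeleton remainder, and the fact that the BDM divergence hits all of the mean-zero pressure space. Two minor remarks. First, you silently drop $G_h$; this is legitimate because gradient-robustness is defined with $\vecb{g} = \vecb{0}$, but it deserves a sentence, and note also that $a_h$ vanishes for the zero ansatz against \emph{all} test pairs $(\vecb{v}_h, \widehat{\vecb{v}}_h)$, so the hybrid test functions cause no trouble. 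Second, your final step shares a subtlety with the paper's proof rather than resolving it: the mass constraint transfers exactly from the continuous admissibility condition since $(\Pi_{Q_h} q, 1) = (q,1)$, but non-negativity of $(q - C)/\cM$ implies non-negativity of $(\Pi_{Q_h} q - C)/\cM$ only when $\Pi_{Q_h}$ preserves signs, which holds for $k=1$ (cellwise averages) but not in general for $k \geq 2$; the paper glosses over this in precisely the same way, so it is not a gap relative to the target proof, but your remark that admissibility is ``tailored to guarantee this compatibility'' is slightly optimistic as stated.
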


\section{Stability and existence of solutions}\label{sec:stability_existence}
This section shows stability and existence of solutions as well as positivity
and mass preservation of $\varrho_h$ for the suggested HDG scheme.
However, only the lowest order case $k = 1$ guarantees the positivity preservation
of $\varrho_h$. 

\subsection{Stability}
The following Lemma recalls \cite[Lemma 4.2]{akbas2020}.

\begin{lemma}\label{lem:upwinding_estimate} Let $\vecb{u} \in
    L^2(\mathcal{F})$ be a single-valued function on each face of the
    triangulation, and let $\varrho^{up}_h$ be the $\vecb{u}$-associated
    upwind value. For any twice continuously differentiable convex
    function $\phi: [0, \infty) \rightarrow \rr^{+}$ there holds 
    \begin{multline} \label{upwind_estimate}
        \sum_{T \in \mesh} \int_{\partial T} \vecb{u} \cdot \vecb{n} \varrho_h^{up} \phi^\prime(\varrho_h)
        - \sum_{T \in \mesh} \int_{\partial T} \vecb{u} \cdot \vecb{n} (\varrho_h \phi^\prime(\varrho_h)) - \phi(\varrho_h)) \\
        =
        \frac{1}{2} \sum_{F \in \facets}  \phi^{\prime \prime}(\varrho^{F}_h) \Big| \int_F \vecb{u} \cdot \vecb{n}_F \jump{\varrho_h}^2 \Big| \ge 0,
    \end{multline}
    with intermediate values $\varrho_h^{F} \in
    [\min(\varrho_{h,1},\varrho_{h,2}),\max(\varrho_{h,1},\varrho_{h,2})]$
    where $\varrho_{h,1}$ and $\varrho_{h,2}$ are the restrictions of
    $\varrho$ on the adjacent elements of facet $F$. 
\end{lemma}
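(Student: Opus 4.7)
The plan is to rewrite the element-wise sum on the left-hand side of \eqref{upwind_estimate} as a sum over facets, use the case distinction in the definition of the upwind value, and recognise the remaining algebraic expression as the second-order Taylor remainder of $\phi$, which is non-negative by convexity.

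First I collect the integrands on each interior face $F = T_1 \cap T_2$, with $\vecb{n}_F$ pointing from $T_1$ to $T_2$. Using $\vecb{n}|_{T_1} = \vecb{n}_F$ and $\vecb{n}|_{T_2} = -\vecb{n}_F$ and noting that $\vecb{u}$ is single-valued on $F$ while $\varrho_h$ is two-valued, the surface contributions from $\partial T_1$ and $\partial T_2$ combine into
\begin{align*}
    \int_F \vecb{u}\cdot\vecb{n}_F \Bigl\{&\varrho_h^{up}\bigl(\phi'(\varrho_{h,1}) - \phi'(\varrho_{h,2})\bigr) \\
    &\quad - \bigl[\varrho_{h,1}\phi'(\varrho_{h,1}) - \phi(\varrho_{h,1})\bigr] + \bigl[\varrho_{h,2}\phi'(\varrho_{h,2}) - \phi(\varrho_{h,2})\bigr]\Bigr\}.
\end{align*}
Boundary faces drop out since the lemma will be applied with $\vecb{u} = \vecb{u}_h \in \vecb{V}_h \subset H_0(\div,\Omega)$, so $\vecb{u}\cdot\vecb{n} = 0$ on $\partial\Omega$.

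Next I split into two cases according to the sign of $\vecb{u}\cdot\vecb{n}_F$. If $\vecb{u}\cdot\vecb{n}_F > 0$ then $\varrho_h^{up} = \varrho_{h,1}$, and a direct simplification of the brace yields $\phi(\varrho_{h,1}) - \phi(\varrho_{h,2}) - \phi'(\varrho_{h,2})(\varrho_{h,1} - \varrho_{h,2})$. If $\vecb{u}\cdot\vecb{n}_F < 0$ then $\varrho_h^{up} = \varrho_{h,2}$, and the brace becomes the negative of $\phi(\varrho_{h,2}) - \phi(\varrho_{h,1}) - \phi'(\varrho_{h,1})(\varrho_{h,2} - \varrho_{h,1})$, i.e.\ the Taylor expansion centred at the other endpoint. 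In either case the Lagrange form of the Taylor remainder, $\phi(a) - \phi(b) - \phi'(b)(a-b) = \tfrac{1}{2}\phi''(\xi)(a-b)^2$ for some $\xi$ between $a$ and $b$, together with $\jump{\varrho_h} = \varrho_{h,1} - \varrho_{h,2}$ shows that the face contribution equals $\tfrac{1}{2}\phi''(\xi)|\vecb{u}\cdot\vecb{n}_F|\jump{\varrho_h}^2 \geq 0$. An application of the integral mean value theorem on $F$ then produces a single intermediate value $\varrho_h^F$ in the required range and the absolute value around $\int_F \vecb{u}\cdot\vecb{n}_F\jump{\varrho_h}^2$ as stated.

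The hard part is purely combinatorial: consistently tracking the orientation of $\vecb{n}_F$, the sign of $\vecb{u}\cdot\vecb{n}_F$, and the corresponding identification of the upwind state. A minor technical subtlety concerns faces on which $\vecb{u}\cdot\vecb{n}_F$ changes sign; one handles this by splitting $F$ into the subsets of constant sign, carrying out the above case distinction on each, and only then invoking the mean value theorem — the absolute value in \eqref{upwind_estimate} collapses the two sign-definite contributions into one term.
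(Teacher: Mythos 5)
Your facet-wise reassembly and the identification of the brace with the Lagrange form of the Taylor remainder are exactly the paper's argument, and your algebra in both upwind cases is correct. The genuine gap sits in your final paragraph, and it traces back to the upwinding convention: the upwind value $\varrho_h^{up}$ is determined \emph{facet-wise} by the sign of the integrated flux $\int_F \vecb{u} \cdot \vecb{n}_F$, not pointwise by the sign of $\vecb{u} \cdot \vecb{n}_F$ — the paper's case distinction is precisely on $\int_F \vecb{u} \cdot \vecb{n}_F \gtrless 0$. Moreover, the lemma is (implicitly) applied to piecewise constant densities, i.e.\ $k=1$; this is also what Theorem~\ref{thm:stability} needs when it requires $\cM(1+\log \varrho_h) \in Q_h$. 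Consequently $\varrho_{h,1}$, $\varrho_{h,2}$, the jump $\jump{\varrho_h}$ and the remainder $\tfrac{1}{2}\phi''(\varrho_h^F)\jump{\varrho_h}^2$ are constants on each $F$: they pull out of the face integral, the face contribution is exactly $\theta_F \int_F \vecb{u} \cdot \vecb{n}_F = \tfrac{1}{2}\phi''(\varrho_h^F)\bigl\lvert \int_F \vecb{u} \cdot \vecb{n}_F \jump{\varrho_h}^2 \bigr\rvert$, and no splitting of $F$ and no integral mean value theorem are needed — the intermediate value $\varrho_h^F$ comes from the Lagrange remainder alone.

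Under your pointwise reading, the claimed \emph{identity} actually fails on faces where $\vecb{u} \cdot \vecb{n}_F$ changes sign: carrying out your case distinction on the sign-definite subsets of $F$ yields the contribution $\tfrac{1}{2}\phi''(\xi)\jump{\varrho_h}^2 \int_F \lvert \vecb{u} \cdot \vecb{n}_F \rvert$, and $\int_F \lvert \vecb{u} \cdot \vecb{n}_F \rvert$ is in general strictly larger than $\lvert \int_F \vecb{u} \cdot \vecb{n}_F \rvert$, so the absolute value in \eqref{upwind_estimate} does \emph{not} collapse the two contributions into one term; you would prove an inequality $\geq$ in place of the stated equality. (The non-negativity, which is all the stability proof consumes, survives either way, but the lemma asserts an identity and only the facet-integrated upwinding delivers it.) Your side remark that boundary faces drop out because the lemma is applied with $\vecb{u}_h \in H_0(\div, \Omega)$ is fine and is also used silently by the paper.
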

\begin{proof}
    First note that the left difference can be written as
    \begin{multline*}
        \sum_{T \in \mesh} \int_{\partial T} \vecb{u} \cdot \vecb{n} \varrho_h^{up} \phi^\prime(\varrho_h)
        - \sum_{T \in \mesh} \int_{\partial T} \vecb{u} \cdot \vecb{n} (\varrho_h \phi^\prime(\varrho_h)) - \phi(\varrho_h)) \\
        =  \sum_{T \in \mesh} \int_{\partial T} \vecb{u} \cdot \vecb{n} (\varrho_h^{up} \phi^\prime(\varrho_h) - \varrho_h \phi^\prime(\varrho_h)) + \phi(\varrho_h)).
    \end{multline*}
    Let $T_1$ and $T_2$ be the two neighboring elements of a
    facet $F \in \facets$, then we can rewrite the sum above as 
    \begin{multline*}
        \sum_{T \in \mesh} \int_{\partial T} \vecb{u} \cdot \vecb{n} (\varrho_h^{up} \phi^\prime(\varrho_h) - \varrho_h \phi^\prime(\varrho_h)) + \phi(\varrho_h)) \\
        =  \sum_{F \in \facets} \int_{F} \vecb{u} \cdot \vecb{n}_F 
        (\phi^\prime(\varrho_{h,1}) (\varrho_h^{up} - \varrho_{h,1}) + \phi^\prime(\varrho_{h,2}) (\varrho_{h,2} - \varrho_h^{up})
        + \phi(\varrho_{h,1}) - \phi(\varrho_{h,2})).
    \end{multline*}
    Let $\theta_F = (\phi^\prime(\varrho_{h,1}) (\varrho_h^{up} -
    \varrho_{h,1}) + \phi^\prime(\varrho_{h,2}) (\varrho_{h,2} - \varrho_h^{up}) +
    \phi(\varrho_{h,1}) - \phi(\varrho_{h,2}))$. Consider the case $\int_F\vecb{u}
    \cdot \vecb{n}_F \ge 0$, then $\varrho_h^{up} - \varrho_{h,1}$, and thus
    with the Taylor expansion at $\varrho_{h,2}$, i.e.
    \begin{align*}
        \phi(\varrho_{h,1}) = \phi(\varrho_{h,2}) + \phi^\prime(\varrho_{h,2}) (\phi(\varrho_{h,1}) - \phi(\varrho_{h,2}))
        + \frac{1}{2}\phi^{\prime\prime}(\varrho_h^F) (\phi(\varrho_{h,1}) - \phi(\varrho_{h,2}))^2,
    \end{align*}
    with a point $\varrho_h^F \in [\min(\varrho_{h,1},\varrho_{h,2}),\max(\varrho_{h,1},\varrho_{h,2})]$, we get
    \begin{align*}
        \theta_F = \frac{1}{2}\phi^{\prime\prime}(\varrho_h^F) (\phi(\varrho_{h,1}) - \phi(\varrho_{h,2}))^2 \ge 0,
    \end{align*}
    where the non-negativity follows by the convexity of $\phi$. For
    the other case, i.e. $\int_F\vecb{u}
    \cdot \vecb{n}_F < 0$, we derive similarly
    \begin{align*}
        \theta_F = -\frac{1}{2}\phi^{\prime\prime}(\varrho_h^F) (\phi(\varrho_{h,1}) - \phi(\varrho_{h,2}))^2 \le 0,
    \end{align*}
    and thus we get 
    $
        \sum_{F \in \facets} \int_{F} \vecb{u} \cdot \vecb{n}_F \theta_F \ge 0.
    $
\end{proof}

\begin{theorem}[Stability]\label{thm:stability}
    For the solution of \eqref{eqn:discrete_scheme}, it holds
\begin{align}
  \| (\vecb{u}_h, \hat{ \vecb{u}}_h) \|_{1,h} & \lesssim \| \vecb{f} \|  + \| \vecb{g} \|_{L^\infty (\Omega)} \| \varrho_h \|,\label{stability_estimate_u}\\
  \sum_{F \in \facets}  \phi^{\prime \prime}(\varrho^{F}_h) \Big| \int_F \vecb{u}_h \cdot \vecb{n}_F \jump{\varrho_h}^2 \Big|  & \lesssim 
  (\| \vecb{f} \|  + \| \vecb{g} \|_{L^\infty(\Omega)} \| \varrho_h \|)^2,\label{stability_estimate_upw}\\
  \Big(1 - \frac{C}{ \cM^2}  \| \vecb{g} \|_{L^\infty(\Omega)}^2\Big) \| \varrho_h \|^2 & \lesssim \frac{1}{ \cM^2} \| \vecb{f} \|^2 + M^2.\label{stability_estimate_rho}
\end{align}
Stability for $\varrho_h$ and $\vecb{u}_h$ is therefore guaranteed for
 $ \| \vecb{g} \|_{L^\infty(\Omega)} / \cM$ small enough. The constant $C$ is a generic
 constant that depends on the shape of the cells, $\lvert \Omega \rvert$ and $\alpha$,
 but not on $h$, $\cM$, $M$ or $\nu$.
\end{theorem}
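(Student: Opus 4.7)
The plan is to prove \eqref{stability_estimate_u} and \eqref{stability_estimate_upw} together through a discrete energy identity obtained by testing the scheme with its own solution, and then to derive \eqref{stability_estimate_rho} from the LBB condition of Lemma~\ref{lem:LBBstabilityStokes} applied to the mean-zero part of $\varrho_h$.

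For the first two estimates I would first test the momentum balance \eqref{eqn:discrete_scheme_abd} with $(\vecb{v}_h,\hat{\vecb{v}}_h) = (\vecb{u}_h,\hat{\vecb{u}}_h)$. Standard HDG coercivity of $a_h$ on $\|\cdot\|_{1,h}$ for $\alpha$ large enough gives
\begin{equation*}
  \|(\vecb{u}_h,\hat{\vecb{u}}_h)\|_{1,h}^{2} \lesssim F_h(\vecb{u}_h) + G_h(\varrho_h,\vecb{u}_h) - b(p(\varrho_h),\vecb{u}_h).
\end{equation*}
The critical pressure term $-b(p(\varrho_h),\vecb{u}_h) = \cM(\varrho_h,\mathrm{div}\,\vecb{u}_h)$ is next converted into a non-negative face dissipation. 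For this I would test the continuity equation \eqref{eqn:discrete_scheme_c} with $\lambda_h := \phi'(\varrho_h)$, where $\phi$ is a strictly convex function normalized by $s\phi''(s) = 1$, e.g.\ $\phi(s) = s\log s - s$, so that $s\phi'(s) - \phi(s) = s$. Because $\mathrm{div}\,\vecb{V}_h \subseteq Q_h$, an elementwise integration by parts followed by Lemma~\ref{lem:upwinding_estimate} identifies $(\varrho_h,\mathrm{div}\,\vecb{u}_h)$ with the negative of $\tfrac{1}{2}\sum_{F\in\facets} \phi''(\varrho_h^F)|\int_F \vecb{u}_h\cdot\vecb{n}_F\jump{\varrho_h}^2|$. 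Substituting back puts both non-negative contributions on the left-hand side, while the right-hand side is controlled by Cauchy--Schwarz, H\"older, and a discrete Poincar\'e inequality $\|\vecb{u}_h\|\lesssim \|(\vecb{u}_h,\hat{\vecb{u}}_h)\|_{1,h}$ through $(\|\vecb{f}\|+\|\vecb{g}\|_{L^\infty(\Omega)}\|\varrho_h\|)\|(\vecb{u}_h,\hat{\vecb{u}}_h)\|_{1,h}$. Absorbing one factor of the norm yields \eqref{stability_estimate_u}; re-inserting \eqref{stability_estimate_u} into the same inequality isolates the face term and gives \eqref{stability_estimate_upw}.

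For \eqref{stability_estimate_rho} I apply Lemma~\ref{lem:LBBstabilityStokes} to $\lambda_h := \varrho_h - M/|\Omega| \in Q_h \cap L^2_0(\Omega)$, producing a test pair $(\vecb{v}_h,\hat{\vecb{v}}_h)$ with $\|(\vecb{v}_h,\hat{\vecb{v}}_h)\|_{1,h}=1$ realizing the inf-sup bound. Because $\vecb{v}_h \in H_0(\mathrm{div},\Omega)$, the constant $M/|\Omega|$ contributes nothing to the divergence pairing, so that effectively $b(\lambda_h,\vecb{v}_h) = b(\varrho_h,\vecb{v}_h)$. Plugging $(\vecb{v}_h,\hat{\vecb{v}}_h)$ into \eqref{eqn:discrete_scheme_abd}, estimating $a_h$ by its continuity and using \eqref{stability_estimate_u} for the velocity contribution, yields $\cM \|\lambda_h\| \lesssim \|\vecb{f}\| + \|\vecb{g}\|_{L^\infty(\Omega)}\|\varrho_h\|$. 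Squaring, combining with the triangle inequality $\|\varrho_h\|^2\leq 2\|\lambda_h\|^2 + 2M^2/|\Omega|$, and isolating $\|\varrho_h\|^2$ on the left then produces \eqref{stability_estimate_rho}, valid precisely under the smallness condition on $\|\vecb{g}\|_{L^\infty(\Omega)}/\cM$ needed to absorb the $\|\varrho_h\|^2$ contribution on the right.

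The \emph{main obstacle} is the admissibility of the test function $\lambda_h = \log\varrho_h$, which requires pointwise positivity of $\varrho_h$ that is not a priori granted and is separately established only in the lowest-order case $k=1$. For higher order one can either regularize by $\phi_\varepsilon(s) = (s+\varepsilon)\log(s+\varepsilon)-(s+\varepsilon)$ and pass to the limit $\varepsilon \downarrow 0$, or interpret the bound as an a priori estimate feeding into a positivity-preserving iteration. A secondary subtlety is carefully tracking the constants in HDG coercivity, continuity and discrete Poincar\'e so as to certify the claimed independence on $\nu$, $\cM$, $M$ and $h$.
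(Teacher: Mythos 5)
Your proposal follows essentially the same route as the paper's proof: the same energy argument combining HDG coercivity with the convex-entropy/upwinding identity of Lemma~\ref{lem:upwinding_estimate} (your $\phi(s)=s\log s-s$ is the paper's $\phi(s)=\cM s\log s$ up to the factor $\cM$ and an irrelevant linear term, and testing the continuity equation with $\phi'(\varrho_h)\in Q_h$ to give $b(p(\varrho_h),\vecb{u}_h)$ the favourable sign is exactly the paper's mechanism), followed by the same inf-sup argument via Lemma~\ref{lem:LBBstabilityStokes} applied to the mean-zero part of the density/pressure and the mass constraint $(\varrho_h,1)=M$ to obtain \eqref{stability_estimate_rho} (your triangle inequality in place of the paper's Pythagoras identity is immaterial). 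The obstacle you flag --- admissibility of $\lambda_h=\log\varrho_h$, which needs $\varrho_h>0$ and piecewise-constant densities, i.e.\ $k=1$ --- coincides with the paper's own restriction, under which it handles positivity separately through the $M$-matrix fixed-point iteration, so your argument is sound in exactly the regime the paper claims.
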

\begin{proof}
    Following \cite{MR2304270,L_MTH_2010} we have the coercivity
    estimate
    \begin{align} \label{eq::coercivity}
        \nu \| (\vecb{u}_h, \hat{ \vecb{u}}_h) \|^2_{1,h} \lesssim \nu a((\vecb{u}_h, \hat{ \vecb{u}}_h),(\vecb{u}_h, \hat{ \vecb{u}}_h)).
    \end{align}
    Testing the momentum equation
    with $\vecb{v}_h = \vecb{u}_h$ gives
    \begin{align*}
        \nu a((\vecb{u}_h, \hat{ \vecb{u}}_h),(\vecb{u}_h, \hat{ \vecb{u}}_h)) +
         b(p(\varrho_h), \vecb{u}_h) = F_h(\vecb{u}_h) + G_h(\varrho_h,\vecb{u}_h).
    \end{align*}
    Due to the upwinding we get the
    correct sign from Lemma~\ref{lem:upwinding_estimate}. For this
    choose the convex function $\phi(s) = \cM s \log(s)$, with
    $\phi^\prime(s) = \cM(\log(s) + 1)$, then we have $\varrho_h \phi^\prime(\varrho_h) -
    \phi(\varrho_h) = c\varrho_h = p(\varrho_h)$. By that \eqref{upwind_estimate} reads as
    \begin{align*}
        c_h(\varrho_h, \vecb{u}_h, \cM(1 + \log(\varrho_h))) + b(p(\varrho_h), \vecb{u}_h) = 
        \frac{1}{2} \sum_{F \in \facets}  \phi^{\prime \prime}(\varrho^{F}_h) \Big| \int_F \vecb{u}_h \cdot \vecb{n}_F \jump{\varrho_h}^2 \Big|.
    \end{align*}
With \eqref{eqn:discrete_scheme_c} and $\cM(1 + \log(\varrho_h)) \in
Q_h$, we get $ b(p(\varrho_h), \vecb{u}_h) \geq 0$. It remains
to bound the right-hand side.
    Using a discrete Friedrichs-type inequality, see for example
    \cite{Brenner:PFDG}, we get
    \begin{align*}
        F_h(\vecb{u}_h)
        \lesssim \| \vecb{f} \| \| (\vecb{u}_h, \hat{ \vecb{u}}_h) \|_{1,h}.
    \end{align*}
    For the other right-hand side term we get similarly
    \begin{align*}
        G_h(\varrho_h,\vecb{u}_h) = (\vecb{g}, \varrho_h \vecb{u}_h)
        & \leq \| \vecb{g} \|_{L^\infty(\Omega)}   \| \varrho_h \| \| \vecb{u}_h \|
         \leq \| \vecb{g} \|_{L^\infty(\Omega)}   \| \varrho_h \| \| (\vecb{u}_h, \hat{ \vecb{u}}_h) \|_{1,h},
    \end{align*}
thus we conclude $\| (\vecb{u}_h, \hat{ \vecb{u}}_h) \|_{1,h} \lesssim
  \| \vecb{f} \| + \| \vecb{g} \|_{L^\infty(\Omega)}  \| \varrho_h \|$. Note, that
  by the above construction we have also proven \eqref{stability_estimate_upw}. \\



For the proof of \eqref{stability_estimate_rho} let $p_h := p(\varrho_h)$ and define the mean value
$\overline p_h := | \Omega |^{-1} \int_\Omega p_h$.
By Lemma \ref{lem:LBBstabilityStokes} it exists a
$\vecb{v}_h \in \vecb{V}_h$ with $\mathrm{div} \vecb{v}_h = p_h - \overline p_h$ such that 
\begin{align*}
    \| p_h - \overline p_h \|^2 
    = b(p_h - \overline p_h, \vecb{v}_h),
\end{align*}
and $\| (\vecb{v}_h, \widehat{\vecb{v}}_h) \|_{1,h}\lesssim \| p_h - \overline{p}_h \|$.
Hence
\begin{align*}
  \| p_h - \overline{p}_h \|^2
&=b(p_h - \overline p_h, \vecb{v}_h) \\
  & = -a_h((\vecb{u}_h, \widehat{\vecb{u}}_h),(\vecb{v}_h, \widehat{\vecb{v}}_h)) - F_h(\vecb{v}_h) - G_h(\varrho_h, \vecb{v}_h)\\
  & \lesssim \| (\vecb{u}_h, \widehat{\vecb{u}}_h ) \|_{1,h} \| (\vecb{v}_h, \widehat{\vecb{v}}_h ) \|_{1,h} + \| \vecb{f} \| \| \vecb{v}_h \| + \lvert G_h(\varrho_h, \vecb{v}_h) \rvert\\
  & \lesssim \left( \| \vecb{f} \| + \| \vecb{g} \|_{L^\infty(\Omega)}  \| \varrho_h \|\right) \| p_h - \overline{p}_h \| .
\end{align*}
where we again used a discrete Friedrichs inequality in the last step. \\

The mass constraint \eqref{eqn:discrete_scheme_e} yields the identity
\begin{align*}
    \overline{p}_h = \frac{1}{|\Omega|} \int_\Omega p_h 
    = \frac{1}{|\Omega|} \int_\Omega \cM \varrho_h
    = \cM M \lvert \Omega \rvert^{-1}
    \quad \Rightarrow \quad
    \| \overline{p}_h \|^2 = \cM^2 M^2 \lvert \Omega \rvert^{-1}.
\end{align*}
Eventually, a Pythagoras theorem and the previous estimates yield
\begin{align*}
    \cM^2 \| \varrho_h \|^2 = \| p_h \|^2
    = \| p_h-\overline p_h \|^2 + \| \overline p_h \|^2
    \leq C \left(\| \vecb{f} \|^2 + \| \vecb{g} \|_{L^\infty(\Omega)} ^2 \| \varrho_h \|^2 \right) + \cM^2 M^2 \lvert \Omega \rvert^{-1},
\end{align*}
which can be reordered into
\begin{align*}
(1 - C \cM^{-2} \| \vecb{g} \|_{L^\infty(\Omega)} ^2) \| \varrho_h \|^2 \lesssim \cM^{-2} \| \vecb{f} \|^2 + M^2.
\end{align*}
This concludes the proof.
\end{proof}

\subsection{Existence of discrete solutions}

This section suggests a fixed-point iteration for the computation of a
solution of \eqref{eqn:discrete_scheme} and shows existence of at least
one fixed-point. The steps are very similar to \cite{akbas2020}.
Throughout this section we assume the lowest order case $k=1$
to guarantee that all computed densities stay non-negative.

\begin{algorithm}[Fixed-point algorithm]\label{alg:fixedpointiteration}
    Given a triangulation $\mathcal{T}$ and a step size $\tau > 0$
    and initial values $\vecb{u}_h^0 = \vecb{0}$ and $\varrho_h^0 := M / \lvert \Omega \rvert$,
    compute, for $n = 0,1,2,\ldots$ until satisfied, 
    \begin{align*}
        (\vecb{u}^{n+1}_h, \widehat{\vecb{u}}^{n+1}_h, \varrho^{n+1}_h)
        = F((\vecb{u}^{n}_h, \widehat{\vecb{u}}^{n}_h, \varrho^{n}_h)),
    \end{align*}
    where $F : \vecb{V}_h \times \widehat{\vecb{V}}_h \times Q_h \rightarrow \vecb{V}_h \times \widehat{\vecb{V}}_h \times Q_h$
    denotes the fixed-point mapping that computes the new iterate by the following sub-systems.
    The new velocity iterate $(\vecb{u}^{n+1}_h, \widehat{\vecb{u}}^{n+1}_h) \in \vecb{V}_h \times \widehat{\vecb{V}}_h$
    satisfies, for all $(\vecb{v}_h, \widehat{\vecb{v}}_h) \in \vecb{V}_h \times \widehat{\vecb{V}}_h$,
    \begin{align}\label{eqn:subsystem_u}
    \nu a_h((\vecb{u}^{n+1}_h, \widehat{\vecb{u}}^{n+1}_h),(\vecb{v}_h, \widehat{\vecb{v}}_h))
     & = F_h(\vecb{v}_h) + G_h(\varrho_h^{n}, \vecb{v}_h) - b(p(\varrho^{n}_h), \vecb{v}_h),
    \end{align}
    and the new density iterate $\varrho_h^{n+1} \in Q_h$ satisfies
    \begin{align}\label{eqn:subsystem_rho}
        \tau^{-1} (\varrho_h^{n+1}, \lambda_h) + c_h(\varrho^{n+1}_h, \vecb{u}^{n+1}_h, \lambda_h)
        = \tau^{-1} (\varrho^{n}_h, \lambda_h)
        \quad \text{for all } \lambda_h \in Q_h.
    \end{align}
    The iteration is stopped if the residuals of both sub-systems are
    below some given tolerance.
\end{algorithm}

\begin{lemma}[Solvability of the sub-systems] \label{lem::std_solvability}
    Both sub-systems \eqref{eqn:subsystem_u} and \eqref{eqn:subsystem_rho}
    are solvable. Moreover, if $\rho_h^{n} > 0$ and $(\rho^n_h,1) = M$,
    then also $\rho_h^{n+1} > 0$ and $(\rho^{n+1}_h,1) = M$.

\begin{proof}
    The solvability of the update \eqref{eqn:subsystem_u} for
  $\vecb{u}^{n+1}_h$ follows from the coercivity of $a_h$, see
  \eqref{eq::coercivity}.

  The solvability of the update \eqref{eqn:subsystem_rho} for
  $\varrho^{n+1}_h$ follows from the fact that for $k=1$ (i.e.
  $\varrho$ is approximated by piecewise constants) the system matrix
  is an $M$-matrix. Indeed, the representation matrix for the form
  $c_h(\varrho_h^{n+1}, \vecb{u}_h^{n+1}, \lambda_h)$ (for fixed
  $\vecb{u}_h^{n+1})$ is weakly diagonal-dominant, has non-negative
  diagonal entries and non-positive off-diagonal entries, and has zero
  row-sums. Hence, adding a positive definite diagonal matrix yields
  an $M$-matrix. That matrix is invertible and has only positive
  entries. Hence, the positivity of the previous density iterate
  $\varrho_h^n$ is preserved. Moreover, also the mass constraint is
  preserved which follows from testing with $\lambda_h \equiv 1$.
\end{proof}
\end{lemma}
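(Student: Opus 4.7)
My plan is to treat the two sub-systems separately; the velocity update is a standard coercive linear problem, while the only real work lies in the analysis of the density update in the lowest-order setting.

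For the velocity sub-system \eqref{eqn:subsystem_u}, the form $\nu a_h$ is coercive on $\vecb{V}_h\times\widehat{\vecb{V}}_h$ with respect to the HDG norm by the estimate \eqref{eq::coercivity}, and the right-hand side (involving the already known $\varrho_h^n$, $\vecb{f}$ and $\vecb{g}$) is a bounded linear functional. Since this is a finite-dimensional square linear system, coercivity implies injectivity and hence unique solvability of $(\vecb{u}_h^{n+1},\widehat{\vecb{u}}_h^{n+1})$; no condition on $\varrho_h^n$ is needed.

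The density sub-system \eqref{eqn:subsystem_rho} becomes, once $\vecb{u}_h^{n+1}$ is known, a linear system in $\varrho_h^{n+1}\in Q_h$. I would exploit the lowest-order assumption $k=1$: with $Q_h=P_0(\mesh)$ the volume term $-(\varrho_h\vecb{u}_h,\nabla\lambda_h)_T$ of $c_h$ vanishes identically, and the mass form $\tau^{-1}(\cdot,\cdot)$ is diagonal with entries $\tau^{-1}|T_j|>0$ in the indicator basis $\{\phi_{T_j}\}$. A face-by-face bookkeeping of the remaining upwind surface term then shows that each interior face $F=T_a\cap T_b$ with $T_a$ upstream contributes a positive entry to the diagonal position $(a,a)$ and a non-positive entry to the off-diagonal position $(b,a)$; no other slots are touched by this face. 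The column sum of column $j$ of the assembled matrix equals $c_h(\phi_{T_j},\vecb{u}_h^{n+1},1)$, which vanishes because on each interior face the upwind value is single-valued while the two neighbouring cells contribute with opposite outward normals, and on $\partial\Omega$ one has $\vecb{u}_h^{n+1}\cdot\vecb{n}=0$ thanks to $\vecb{V}_h\subset H_0(\mathrm{div},\Omega)$. Consequently the full system matrix $A$ has non-positive off-diagonals, positive diagonals, and is strictly column-diagonally dominant with column sums $\tau^{-1}|T_j|>0$. These properties characterize an M-matrix, so $A$ is invertible and $A^{-1}\ge 0$ entry-wise.

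Applying this to the strictly positive right-hand side $b_i=\tau^{-1}|T_i|\varrho_{h,i}^n>0$ immediately gives $\varrho_h^{n+1}\ge 0$; strict positivity follows from irreducibility of $A$ (equivalently, connectedness of the mesh). Mass conservation is even cheaper: testing \eqref{eqn:subsystem_rho} with $\lambda_h\equiv 1$ and reusing the same vanishing of $c_h(\cdot,\vecb{u}_h^{n+1},1)$ derived above yields $(\varrho_h^{n+1},1)=(\varrho_h^n,1)=M$.

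The main obstacle is the M-matrix verification, in particular the column-sum cancellation, which crucially combines $H(\mathrm{div})$-conformity of $\vecb{V}_h$ with the homogeneous normal trace on $\partial\Omega$. The restriction to $k=1$ is essential: for $k\ge 2$ the basis of $Q_h$ is no longer non-negative, so positivity of coefficients ceases to imply pointwise positivity and the M-matrix argument breaks down, consistently with the preamble to the lemma.
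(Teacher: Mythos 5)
Your proposal is correct and follows essentially the same route as the paper: coercivity of $a_h$ for the velocity sub-system, the observation that for $k=1$ the density update matrix is a Z-matrix (positive diagonal, non-positive off-diagonal upwind entries) which becomes an M-matrix after adding the diagonal mass matrix $\tau^{-1}|T_j|$, and testing with $\lambda_h\equiv 1$ for mass conservation. Two fine points, both in your favour: your \emph{column}-sum cancellation $c_h(\phi_{T_j},\vecb{u}_h^{n+1},1)=0$ is the precise version of the paper's looser ``zero row-sums'' claim (the row sums equal $\int_{T_i}\mathrm{div}\,\vecb{u}_h^{n+1}$, which need not vanish for the intermediate iterate), while your appeal to irreducibility for strict positivity is unnecessary and in fact fragile --- $A$ is reducible whenever the flux vanishes on some faces (e.g.\ $\vecb{u}_h^{n+1}=\vecb{0}$) --- but strict positivity follows anyway since $A^{-1}\ge 0$ has no zero rows and the right-hand side $\tau^{-1}|T_i|\varrho^n_{h,i}$ is entrywise strictly positive.
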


The following lemma establishes existence of a fixed-point via Brouwer's fixed-point theorem.

\begin{lemma}[Existence of solutions]
    On every fixed shape-regular mesh $\mathcal{T}$,
    the discrete nonlinear system \eqref{eqn:discrete_scheme} has at least one solution.
\end{lemma}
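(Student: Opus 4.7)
The plan is to apply Brouwer's fixed-point theorem to the mapping $F$ from Algorithm~\ref{alg:fixedpointiteration}, restricted to a suitable compact convex subset $K$ of the finite-dimensional space $\vecb{V}_h \times \widehat{\vecb{V}}_h \times Q_h$. Any fixed point of $F$ automatically solves \eqref{eqn:discrete_scheme}: at such a point the relaxation terms $\tau^{-1}(\varrho_h^{n+1} - \varrho_h^n, \lambda_h)$ cancel in \eqref{eqn:subsystem_rho} to recover \eqref{eqn:discrete_scheme_c}, equation \eqref{eqn:subsystem_u} coincides with \eqref{eqn:discrete_scheme_abd}, and the mass constraint \eqref{eqn:discrete_scheme_e} is built into $K$.

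For the invariant set I would take
\[
K := \left\{\,((\vecb{u}_h, \hat{\vecb{u}}_h), \varrho_h) : \varrho_h \geq 0,\ (\varrho_h, 1) = M,\ \|(\vecb{u}_h, \hat{\vecb{u}}_h)\|_{1,h} \leq R_u\,\right\},
\]
with $R_u$ chosen below. The set $K$ contains the initial iterate $(\vecb{0}, \vecb{0}, M/|\Omega|)$, is closed and bounded in the finite-dimensional ambient space (hence compact), and is convex since all defining constraints are linear or convex.

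The first step is to check $F(K) \subseteq K$. Lemma~\ref{lem::std_solvability} immediately yields $\varrho_h^{n+1} \geq 0$ with $(\varrho_h^{n+1}, 1) = M$. Testing \eqref{eqn:subsystem_u} with its own solution and using the coercivity estimate \eqref{eq::coercivity}, a discrete Friedrichs inequality, and $\|p(\varrho_h^n)\| = \cM \|\varrho_h^n\|$ yields
\[
\nu \|(\vecb{u}_h^{n+1}, \hat{\vecb{u}}_h^{n+1})\|_{1,h} \lesssim \|\vecb{f}\| + \bigl(\|\vecb{g}\|_{L^\infty(\Omega)} + \cM\bigr) \|\varrho_h^n\|.
\]
On the fixed mesh, non-negativity together with the mass constraint gives $\|\varrho_h^n\|_{L^1(\Omega)} = M$, and norm equivalence on the finite-dimensional space $Q_h$ then yields a mesh-dependent bound $\|\varrho_h^n\| \leq R_\rho$; choosing $R_u$ from the estimate above closes the inclusion. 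The second step is continuity of $F$: the velocity subsystem is linear with right-hand side depending linearly on $\varrho_h^n$, so the map $\varrho_h^n \mapsto (\vecb{u}_h^{n+1}, \hat{\vecb{u}}_h^{n+1})$ is continuous. In the density subsystem, the system matrix depends on $\vecb{u}_h^{n+1}$ only through the upwind flux $(\vecb{u}_h^{n+1} \cdot \vecb{n})\varrho_h^{up}$, which rewrites as $s^+ \varrho_{h,1} - s^- \varrho_{h,2}$ with $s := \vecb{u}_h^{n+1} \cdot \vecb{n}_F$ and is therefore continuous in $\vecb{u}_h^{n+1}$; combined with uniform invertibility from the M-matrix property, this makes $\varrho_h^{n+1}$ depend continuously on $(\vecb{u}_h^{n+1}, \varrho_h^n)$.

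Brouwer's fixed-point theorem then delivers a fixed point in $K$, which is a solution of \eqref{eqn:discrete_scheme}. The main technical obstacle is the boundedness step: the stability estimate \eqref{stability_estimate_rho} of Theorem~\ref{thm:stability} is available only at a fixed point, not along the iteration, so a bound on $\|\varrho_h^n\|$ has to be extracted from the mass and non-negativity constraints alone. Norm equivalence on the fixed finite-dimensional $Q_h$ handles this cleanly, at the price of a mesh-dependent $R_\rho$ — acceptable here since Brouwer only requires compactness on the given triangulation.
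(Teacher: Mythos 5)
Your proposal is correct and follows essentially the same route as the paper: Brouwer's fixed-point theorem applied to the mapping $F$ of Algorithm~\ref{alg:fixedpointiteration}, with non-negativity and mass preservation from Lemma~\ref{lem::std_solvability}, the extra estimate $b(p(\varrho^{n}_h), \vecb{v}_h) \leq \cM \| (\vecb{v}_h, \hat{\vecb{v}}_h) \|_{1,h} \| \varrho^{n}_h \|$, and the mesh-dependent bound $\| \varrho^n_h \| \leq C(h) \| \varrho^n_h \|_{L^1} = C(h) M$ via norm equivalence on the fixed triangulation. If anything, your write-up is slightly more careful than the paper's on the continuity of $F$ (the paper loosely calls the composition ``linear,'' whereas you correctly treat it as a continuous composition, with the upwind flux rewritten via $s^{\pm}$ and continuity of matrix inversion), and you make explicit the cancellation of the relaxation terms at a fixed point.
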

\begin{proof}
  The mapping $F$ that defines the fixed-point iteration in Algorithm
  \ref{alg:fixedpointiteration} is linear and continuous, since it
  consists of the composition of two solvable linear systems of
  equations, see Lemma~\ref{lem::std_solvability}.
  
  To apply Brouwer's fixed-point theorem, it remains to show that $F$
  maps a convex set into itself. This can be shown by similar
  arguments as in Theorem~\ref{thm:stability}, but the term
  $b(p(\varrho^{n}_h), \vecb{v}_h)$ has to be estimated by 
  \begin{align*}
    b(p(\varrho^{n}_h), \vecb{v}_h)
    \leq \| \mathrm{div} (\vecb{v}_h) \| \| p(\varrho^{n}_h) \|
    \leq \cM  \| (\vecb{v}_h, \hat{\vecb{v}}_h) \|_{1,h} \| \varrho^{n}_h \|.
  \end{align*}
  Since all discrete norms on the fixed triangulation $\mathcal{T}$
  are equivalent (with some possibly mesh-dependent constant $C(h)$)
  and the mass constraint is preserved in every iteration,
  we can employ the pessimistic but sufficient bound
  \begin{align*}
    \| \varrho^{n}_h \| \leq C(h) \| \varrho^{n}_h \|_{L^1} = C(h) M.
  \end{align*}
  Hence, all iterates stay within a bounded convex set, which justifies
  the application of Brouwer's fixed-point theorem to conclude the existence
  of a fixed-point.
\end{proof}

\section{Convergence of the scheme}\label{sec:convergence}
This section shows convergence of the discrete solutions
to a weak solution of the model problem under suitable assumptions.

For this we apply a Rellich-type theorem of \cite{MR2931412} for (H)DG
approximations. Although \cite{MR2931412} considers only the scalar
case, the vector valued case follows accordingly. The result involves an
element-wise lifting operator, defined on each $T \in \mathcal{T}$ by
\begin{align*}
    R_h|_T : \vecb{L}^2(\partial T) \rightarrow [P_k(T)]^{d \times d}, \quad   \vecb{g} \mapsto \Phi_h,
\end{align*}
where $\Phi_h$ is given by
\begin{align*}
    (\Phi_h, \Psi_h)_T = (\vecb{g}, \Psi_h \vecb{n}) \quad \text{for all } \quad \Psi_h \in [P_k(T)]^{d \times d}.
\end{align*}
Moreover, there is the operator $S_h : \vecb{H}^1(\mathcal{T}) \rightarrow \prod_{T \in \mathcal{T}} \vecb{L}^2(\partial T)$
defined by
\begin{align*}
    S_h (\vecb{u}) := \bigl\lbrace \left(\vecb{u}|_T\right)|_{\partial T} \bigr\rbrace_{T \in \mathcal{T}},
\end{align*}
that collects all cell boundary traces.

\begin{theorem}\label{thm:convergence}
    Consider a sequence of shape-regular triangulations $(\mathcal{T}_h)_{h \rightarrow 0}$.
    Let $((\vecb{u}_h, \widehat{\vecb{u}}_h), \varrho_h)
    $ denote the corresponding discrete solution of \eqref{eqn:discrete_scheme} on $\mathcal{T}_h$. Then, up to extraction of a
    subsequence, it holds
    \begin{itemize}
        \item[(i)] the sequence $(\vecb{u}_h)_{h \rightarrow 0}$ converges strongly to some $\vecb{u} \in \vecb{L}^2(\Omega) \cap \vecb{H}^1_0(\Omega)$ and 
        $\nabla \vecb{u}_h + R_h(\widehat{\vecb u}_h + (S_h \vecb{u}_h)_t) \rightharpoonup \nabla \vecb{u}$,
        \item[(ii)] the sequence $(\varrho_h)_{h \rightarrow 0}$ converges weakly in $L^2(\Omega)$ to a limit $\varrho \in L^2(\Omega)$,
        \item[(iii)] the sequence $(p_h)_{h \rightarrow 0} := (p(\varrho_h))_{h \rightarrow 0}$ converges weakly in $L^2(\Omega)$ to a limit $p_\star \in L^2(\Omega)$, 
        \item[(iv)]  $p_\star$ and $\varrho$ satisfy the equation of state, i.e., $p_\star = p(\varrho)$, 
        \item[(v)] the limit $(\vecb{u}, \varrho)$ is a weak solution of \eqref{eqn:continuous_problem}.
    \end{itemize}
\end{theorem}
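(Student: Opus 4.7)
The plan follows the compactness strategy of \cite{akbas2020,gal-09-conv}, transferred to the HDG setting. Theorem~\ref{thm:stability} provides $h$-uniform bounds on $\|(\vecb{u}_h,\hat{\vecb{u}}_h)\|_{1,h}$, on $\|\varrho_h\|$ (assuming $\|\vecb{g}\|_{L^\infty(\Omega)}/\cM$ is small enough), and on the upwind dissipation term \eqref{stability_estimate_upw}. The first task is to upgrade these bounds into convergence statements. Applying the DG-Rellich-type result of \cite{MR2931412} component-wise to the vector-valued field $\vecb{u}_h$, the HDG-norm bound yields a subsequence with $\vecb{u}_h \to \vecb{u}$ strongly in $\vecb{L}^2(\Omega)$ and $\nabla \vecb{u}_h + R_h(\hat{\vecb{u}}_h + (S_h\vecb{u}_h)_t) \rightharpoonup \nabla \vecb{u}$ weakly in $L^2$, with $\vecb{u}\in \vecb{H}^1_0(\Omega)$ thanks to the trace-penalty term in $\|\cdot\|_{1,h}$; this yields (i). Weak $L^2$-compactness of $(\varrho_h)$ gives (ii), and by linearity $p_h = \cM \varrho_h \rightharpoonup \cM \varrho =: p_\star = p(\varrho)$, which covers (iii) and (iv) at once.

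For (v), passage to the limit in the momentum equation \eqref{eqn:discrete_scheme_abd} follows from standard HDG consistency arguments: for a smooth test function $\vecb{v}\in C_c^\infty(\Omega)^d$, I would use its BDM interpolant $\vecb{v}_h\in\vecb{V}_h$ together with the tangential trace as $\hat{\vecb{v}}_h$; then interpolation estimates and the weak convergence of the lifted gradient in (i) combine to give $a_h((\vecb{u}_h,\hat{\vecb{u}}_h),(\vecb{v}_h,\hat{\vecb{v}}_h)) \to a(\vecb{u},\vecb{v})$, while $b(p(\varrho_h),\vecb{v}_h) \to -(p(\varrho),\mathrm{div}\,\vecb{v}) = b(p(\varrho),\vecb{v})$ via weak convergence of $\varrho_h$, and both $F_h$ and $G_h$ pass to their continuous counterparts using the strong convergence $\vecb{v}_h\to\vecb{v}$ in $L^2$ (for $G_h$ the weak-times-strong pairing $(\vecb{g}\cdot\vecb{v}_h,\varrho_h)$ is used).

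The main obstacle is the continuity equation \eqref{eqn:discrete_scheme_c}, because it involves the nonlinear product $\varrho_h \vecb{u}_h$ and the upwind facet terms. For a test function $\lambda\in W^{1,\infty}(\Omega)$, take $\lambda_h := \Pi_{Q_h}\lambda$ and split
\begin{align*}
    c_h(\varrho_h,\vecb{u}_h,\lambda_h) = -\sum_{T\in\mesh}(\varrho_h \vecb{u}_h,\nabla \lambda_h)_T + \sum_{F\in\facets}\int_F \vecb{u}_h\cdot\vecb{n}_F\,\varrho_h^{up}\jump{\lambda_h}.
\end{align*}
The volume term converges to $-(\varrho \vecb{u},\nabla\lambda)$ because $\vecb{u}_h\to\vecb{u}$ strongly in $L^2$ while $\varrho_h\rightharpoonup \varrho$ weakly in $L^2$, so the product converges weakly in $L^1$ and pairs with the strongly converging $\nabla\lambda_h$. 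The facet term has to be shown to vanish: writing $\varrho_h^{up} = \mean{\varrho_h} \pm \tfrac{1}{2}\jump{\varrho_h}$ and using $\jump{\lambda_h} = \jump{\lambda_h - \lambda}$ together with the $O(h)$ approximation of $\lambda$ by $\lambda_h$ and the bound on $\sum_F|\int_F \vecb{u}_h\cdot\vecb{n}_F\jump{\varrho_h}^2|$ from \eqref{stability_estimate_upw}, a Cauchy--Schwarz estimate yields $O(h^{1/2})$ smallness. Combining these limits establishes $(\varrho \vecb{u},\nabla\lambda) = 0$ for every $\lambda\in W^{1,\infty}(\Omega)$, which is \eqref{eqn:continuous_problem_b}. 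The critical input is the strong $L^2$-convergence of $\vecb{u}_h$ from the DG-Rellich theorem; without it the nonlinear product $\varrho_h \vecb{u}_h$ could not be identified, and the jump control provided by \eqref{stability_estimate_upw} is what makes the upwind stabilization disappear in the limit rather than contribute a spurious term.
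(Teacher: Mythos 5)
Your parts (i)--(iv) and your passage to the limit in the momentum equation coincide with the paper's argument (DG--Rellich theorem of the cited reference, linearity of the equation of state, conforming interpolants of smooth test functions plus weak--strong convergence). The genuine gap is in the continuity equation. You test \eqref{eqn:discrete_scheme_c} with $\lambda_h = \Pi_{Q_h}\lambda$ and claim the volume term carries the limit while the facet sum is $O(h^{1/2})$. This fails exactly in the lowest-order case $k=1$, which is the case the paper's positivity and existence theory covers: there $Q_h = P_0(\mesh)$, so $\nabla\lambda_h \equiv 0$ and the volume term is \emph{identically zero}; the entire weak divergence must be carried by the facet sum, which is therefore $O(1)$, not $o(1)$. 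A crude count confirms this: $\jump{\lambda_h} = O(h)$ per face, each face contributes $\int_F \lvert \vecb{u}_h\cdot\vecb{n}_F\rvert\,\lvert\varrho_h^{up}\rvert \lesssim h^{-1}\|\vecb{u}_h\|_{T_F}\|\varrho_h\|_{T_F}$ by a trace inequality, and summing gives $O(\|\vecb{u}_h\|\,\|\varrho_h\|) = O(1)$. Moreover, your mechanism for smallness is wrong even where the split makes sense: writing $\varrho_h^{up} = \mean{\varrho_h} \pm \tfrac{1}{2}\jump{\varrho_h}$, the dissipation bound \eqref{stability_estimate_upw} controls only the $\jump{\varrho_h}$ half; the $\mean{\varrho_h}$ half is precisely the finite-volume discrete divergence and converges to $-(\varrho\vecb{u},\nabla\lambda)$, not to zero. (For $k\geq 2$ your route could be patched, since then $\jump{\lambda_h} = O(h^{k})$ beats the $h^{-1}$ trace factor and the volume term does converge, but that rescue is unavailable at $k=1$ and is not the dissipation estimate you invoke.)

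The paper circumvents this by never testing the discrete continuity equation with an approximation of $\lambda$ at all. It uses \eqref{eqn:discrete_scheme_c} only with element indicator functions $\chi_T \in Q_h$ to show that the lowest-order Raviart--Thomas lift $\vecb{q}_h|_T := I_h^{\mathrm{RT}_0}(\varrho_h^{up}\vecb{u}_h|_T)$ of the upwinded momentum is exactly divergence-free, then pairs $0 = (\psi_h, \mathrm{div}\,\vecb{q}_h) = -(\vecb{q}_h,\nabla\psi_h)$ with a \emph{continuous} piecewise-$P_1$ approximation $\psi_h$ of the test function (so no jump terms in the test function ever arise, and no membership of $\psi_h$ in $Q_h$ is needed), and finally shows $\lvert(\vecb{q}_h - \varrho_h\vecb{u}_h,\nabla\psi_h)\rvert \lesssim h^{1/2}$. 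It is in this last estimate that the upwind dissipation \eqref{stability_estimate_upw} enters, controlling the facet differences $\varrho_h^{up} - \varrho_h|_T$ through a Cauchy inequality weighted by $(\varrho_h^F)^{\pm 1}$, together with trace and inverse inequalities and the stability bounds of Theorem~\ref{thm:stability}. If you wish to keep your structure, you would have to replace the claim ``the facet term vanishes'' by an identification of its limit as $-(\varrho\vecb{u},\nabla\lambda)$, which in effect forces you back to a construction of the paper's type.
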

\begin{proof}[Proof of (i)-(iii)]
    By Theorem~\ref{thm:stability} the sequence $\vecb{u}_h$ is bounded and the result follows
    from \cite[Theorem~1]{MR2931412}.
\end{proof}
\begin{proof}[Proof of (iv)]
    For $\gamma = 1$ this is straightforward, since the equation of
    state is linear. To see this consider a function $\varphi \in
    C_c^\infty(\Omega)$ and some sequence $\varphi_h := \Pi_{Q_h}
    \varphi$ that converges strongly towards $\varphi$. For that
    sequence, due to weak-strong convergence, it holds
    \begin{align*}
        (p_h, \varphi_k) & \rightarrow (p_\star, \varphi),
    \end{align*}
    and on the other hand
    \begin{align*}
        (p_h, \varphi_k) = (p(\varrho_k), \varphi_k) = (c \varrho_k, \varphi_k) & \rightarrow (c \varrho, \varphi) = (p(\varrho), \varphi).
    \end{align*}
    This allows to conclude
    \begin{align*}
        (p_\star - p(\varrho), \varphi) = 0 \quad \text{for all } C_c^\infty(\Omega),
    \end{align*}
    which implies (iv).
\end{proof}
\begin{proof}[Proof of (v)]
    We first prove that $(\vecb{u}_h, \varrho_h)$ satisfy the momentum equation.
    Take any vector-valued smooth test function $\vecb{v} \in
    \vecb{C}_c^\infty(\Omega)$ and approximate it by best-approximations
    $\vecb{v}_h \in \vecb{V}_h \! \cap \! \vecb{H}^1_0(\Omega)$ such that 
    \begin{align*}
        \vecb{v}_h \rightarrow \vecb{v} \quad \text{strong in } \vecb{H}_0^1(\Omega).
    \end{align*}
    Now choose $\widehat{\vecb{v}}_h = (\vecb{v}_h)_t$ then
    strong-weak convergence, see \cite{MR2931412}, yields
    \begin{align*}
    a_h((\vecb{u}_h, \widehat{\vecb{u}}_h),(\vecb{v}_h, \widehat{\vecb{v}}_h))
    & := 
    \sum_{T \in \mathcal{T}} (\nabla \vecb{u}_h, \nabla \vecb{v}_h)_T
    + (\nabla \vecb{v}_h \vecb{n}, (\widehat{\vecb{u}}_h - \vecb{u}_h)_t)_{\partial T} \\
    & = \sum_{T \in \mathcal{T}} (\nabla \vecb{u}_h + R_h|_T((\widehat{\vecb{u}}_h - S_h \vecb{u}_h)_t), \nabla \vecb{v}_h)_T\\
    & \rightarrow (\nabla \vecb{u}, \nabla \vecb{v}) = a(\vecb{u},\vecb{v}).
    \end{align*}
    Next, since $\vecb{v}_h$ is continuous, we have
    \begin{align*}
        b_h(p(\varrho_h), \vecb{v}_h) 
        & = \sum_{T \in \mathcal{T}} - (\mathrm{div}(\vecb{v}_h), p(\varrho_h))_T = -\int_\Omega \div (\vecb{v}_h) p(\varrho_h) 
         \rightarrow -\int_\Omega \div (\vecb{v}) p(\varrho).
    \end{align*}
    Using strong convergence and weak-strong convergence one also obtains
    \begin{align*}
        \int_\Omega \vecb{f} \vecb{v}_h \rightarrow \int_\Omega \vecb{f} \vecb{v},
        \quad \text{and} \quad
        \int_\Omega \vecb{g} \varrho_h \vecb{v}_h \rightarrow \int_\Omega \vecb{g} \varrho \vecb{v}.
    \end{align*}
    
    It remains to prove that the limits fulfill the continuity equation.
    Consider a test function $\psi \in
    \vecb{C}^\infty(\Omega)$ and approximate it by best-approximations
    $\psi_h \in P_1(\mathcal{T}) \cap \vecb{H}^1(\Omega)$ such that 
    \begin{align*}
        \psi_h \rightarrow \psi \quad \text{strong in } \vecb{H}^1(\Omega)
        \quad \text{and} \quad
        \| \nabla \psi_h \|_{L^\infty(\Omega)}  \lesssim \| \nabla \psi \|_{L^\infty(\Omega)} . 
    \end{align*}
    The bound follows from an inverse inequality for polynomials and
    the stability of the $H^1$-best-approximation, i.e.
    \begin{align*}
        \| \nabla \psi_h \|_{L^\infty(\Omega)}  \lesssim h^{-d/2} \| \nabla \psi_h \| \lesssim h^{-d/2} \| \nabla \psi \|,
    \end{align*}
    and the estimate
    \begin{align*}
        \| \nabla \psi \|^2 \leq \|\nabla \psi\|_{L^1} \|\nabla \psi\|_{L^\infty(\Omega)}  \leq \|1\| \|\nabla \psi\| \|\nabla \psi\|_{L^\infty(\Omega)}  \approx h^{d/2} \|\nabla \psi\| \|\nabla \psi\|_{L^\infty(\Omega)} .
    \end{align*}
    
    The discrete momentum $\varrho_h \vecb{u}_h$ is approximated into
    some $q_h \in \vecb{V}_h \subset \vecb{H}(\mathrm{div}, \Omega)$ 
    by
\begin{equation}\label{eq:HdivInterpolation_rhou}
  \vecb{q}_h|_T := I_h^{\mathrm{RT}_0} (\varrho^{\text{up}}_h \vecb{u}_h|_T)
  \quad \text{on each } T \in \mathcal{T},
\end{equation}
where $I_h^{\mathrm{RT}_0}$ is the interpolation operator into the
lowest order Raviart--Thomas space, see \cite{brezzi}. Note, that this
interpolation is divergence-free, because
\begin{align*}
    \mathrm{div}(\vecb{q}_h|_T) = c_h(\varrho_h, \vecb{u}_h, \chi_T),
\end{align*}
where $\chi_T \in Q_h$ is the indicator function of $T$, i.e. $\chi_T = 1$ on $T$ and zero elsewhere.
With that, it holds
\begin{equation*}
  0  = (\psi_h, \mathrm{div} \, \vecb{q}_h) \\
     = - (\vecb{q}_h, \nabla \psi_h) \\
     = -( \vecb{q}_h - \varrho_h \vecb{u}_h, \nabla \psi_h)
    - (\varrho_h \vecb{u}_h, \nabla \psi_h).
\end{equation*}
It remains to show that the first term on the right-hand side
converges to zero. A triangle inequality yields
\begin{equation}\label{eq:critical_estimate}
\begin{split}
  \left | ( \vecb{q}_h - \varrho_h \vecb{u}_h, \nabla \psi_h) \right |
    & \leq  \sum_{T \in \mathcal{T}} \left | \nabla \psi_h|_T \cdot \left ( \int_T (\vecb{q}_h - \varrho_h I_h^{\mathrm{RT}_0} \vecb{u}_h) \,  + \int_T \varrho_h (I_h^{\mathrm{RT}_0} \vecb{u}_h - \vecb{u}_h) \,   \right ) \right | \\
    & \leq
        C \sum_{T \in \mathcal{T}} \| \vecb{q}_h - \varrho_h I_h^{\mathrm{RT}_0} \vecb{u}_h \|_{L^1(T)} + C \| \varrho_h \| \, \| I_h^{\mathrm{RT}_0} \vecb{u}_h
        -  \vecb{u}_h \|.
    \end{split}
\end{equation}
The term $\| \varrho_h \| \, \| I_h^{\mathrm{RT}_0} \vecb{u}_h
-  \vecb{u}_h \|$ converges to $0$, according to
the interpolation properties of $I_h^{\mathrm{RT}_0}$
and the stability estimate for $\| \varrho_h \|$ and $\| \vecb{u}_h \|_{1,h}$.
It remains to estimate $\sum_T \| \vecb{q}_h - \varrho_h I_h^{\mathrm{RT}_0} \vecb{u}_h \|_{L^1(T)}$.
Interpolation properties of $I_h^{\mathrm{RT}_0}$ yield
\begin{align*}
  \sum_T \| \vecb{q}_h - \varrho_h I_h^{\mathrm{RT}_0} \vecb{u}_h \|_{L^1(T)}
  & \lesssim \sum_T h_T \sum_{F \in \mathcal{F}(T)} \left \lvert (\varrho^\text{up}_h - \varrho_h|_T ) \int_F \vecb{u}_h \cdot \vecb{n}_F \, ds \right\rvert\\
  & \lesssim \sum_{F \in \mathcal{F}(\Omega)} h_F \lvert \jump{ \varrho_h}_F \rvert \, \left\lvert \int_F \vecb{u}_h \cdot \vecb{n}_F \right\rvert := A.
\end{align*}
There holds \(A \rightarrow 0\) which can be proven as follows. A Cauchy inequality shows
\begin{align}\label{eqn:estimateA_case1}
  A \lesssim \left( \sum_{F \in \mathcal{F}(\Omega)}  \left\lvert \int_F \vecb{u}_h \cdot \vecb{n}_F \right\rvert \left(\varrho_h^F\right)^{-1} \jump{ \varrho_h}^2_F \right)^{1/2} \left(\sum_{F \in \mathcal{F}(\Omega)} h_F^2 \left\lvert \int_F \vecb{u}_h \cdot \vecb{n}_F \right\rvert \varrho_h^F \right)^{1/2}.
\end{align}
The left sum is bounded by Theorem~\ref{thm:stability}. To show that
the second sum converges to zero, we employ a H\"older inequality,
a trace inequality and an inverse inequality on some
neighboring simplex \(T_F\) of \(F\) to obtain
\begin{align}\label{eqn:bound_for_uF}
    \left\lvert \int_F \vecb{u}_h \cdot \vecb{n}_F \right\rvert
  \lesssim \| \vecb{u}_h \|^{1/2}_{T_F} \| \nabla \vecb{u}_h \|^{1/2}_{T_F} \| 1 \|_{F}
  \lesssim h_F^{(d-2)/2} \| \vecb{u}_h \|_{T_F}.
\end{align}
Hence,
\begin{align*}
    \left(\sum_{F \in \mathcal{F}(\Omega)} h_F^2 \left\lvert \int_F \vecb{u}_h \cdot \vecb{n}_F \right\rvert \varrho_h^F \right)^{1/2}
    \lesssim \left(\sum_{F \in \mathcal{F}(\Omega)} h_F^{(d+2)/2} \| \vecb{u}_h \|_{T_F} \varrho_h^F\right)^{1/2}.
\end{align*}
Then, another Cauchy inequality, a Friedrichs inequality
for piecewise $H^1$ functions \cite{Brenner:PFDG} and some overlap arguments yield
\begin{align*}
  \left(\sum_{F \in \mathcal{F}(\Omega)} h_F^2 \left\lvert \int_F \vecb{u}_h \cdot \vecb{n}_F \right\rvert  \varrho_h^F \right)^{1/2}
  & \leq \left(\sum_{F \in \mathcal{F}(\Omega)} \| \vecb{u}_h\|_{T_F}^2 \right)^{1/4} \left( \sum_{F \in \mathcal{F}(\Omega)} h_F^{d+2} \left(\varrho_h^F\right)^2 \right)^{1/4}\\
  & \lesssim \| (\vecb{u}_h, \hat{\vecb{u}}_h) \|^{1/2}_{1,h} \left( \sum_{F \in \mathcal{F}(\Omega)} h_F^{d+2} \left(\varrho_h^F\right)^2 \right)^{1/4}.
\end{align*}
Since \(\varrho_h^F\) is smaller than \(\varrho_h|_{T_F}\) for some
neighboring simplex \(T_F\) of \(T\), we also can bound the remaining sum by
\begin{align*}
 \left( \sum_{F \in \mathcal{F}(\Omega)} h_F^{d+2} \left(\varrho_h^F\right)^2 \right)^{1/4}
 \lesssim \left( \sum_{F \in \mathcal{F}(\Omega)} h_F^2 \lvert T_F \rvert \varrho_h|_{T_F}^2 \right)^{1/4}
 \leq h^{1/2} \| \varrho_h \|^{1/2}.
\end{align*}
According to Theorem~\ref{thm:stability} the norm $\| (\vecb{u}_h, \hat{\vecb{u}}_h) \|_{1,h}$ and $\| \varrho_h \|$ are bounded and
so we eventually arrive at
  \begin{align*}
    A \lesssim h^{1/2}.
  \end{align*}
  This and weak-strong convergence ($\vecb{u}_h \nabla \psi_h$ converges strongly against $\vecb{u} \nabla \psi$)
  implies
  \begin{align*}
    \lvert (\varrho \vecb{u}, \nabla \psi) \rvert
    & = \lvert (\varrho \vecb{u}, \nabla \psi) - (\varrho_h \vecb{u}_h, \nabla \psi_h) \rvert 
      + \lvert (\varrho_h \vecb{u}_h, \nabla \psi_h) \rvert\\
    & \lesssim \lvert (\varrho \vecb{u}, \nabla \psi) - (\varrho_h \vecb{u}_h, \nabla \psi_h) \rvert + h^{1/2}
    \rightarrow 0.
  \end{align*}
  This concludes the proof.
\end{proof}

\begin{remark}[Asmyptotic convergence to a pressure-robust scheme]
On a fixed mesh and for $\cM \rightarrow \infty$,
the solutions of the scheme \ref{eqn:discrete_scheme} converge to
a pressure-robust divergence-free solution of the incompressible
Stokes equations, see \cite[Lemma 6.4]{akbas2020} for details and a proof.

\end{remark}

\section{A fully discontinuous HDG scheme}\label{sec:fullyDG}

This section elaborates on the qualitative improvements
by strictly enforcing the normal continuity of the velocity.
As for the Stokes model problem, the $H(\mathrm{div})$
conformity yields $L^2$ orthogonality of the divergence-free
part of the velocity with gradients. This property is lost
when a full HDG scheme is used that also allows jumps of the
normal component. For comparison in the numerical experiments
also this scheme shall be briefly discussed.

The ansatz spaces for this variant reads as 
\begin{align*}
    \vecb{V}_h &:= \vecb{P}_{k}(\mesh),\\
    \widehat{\vecb{V}}_h &:= \left\{\widehat{\vecb{v}}_h \in \vecb{L}^2(\facets): \widehat{\vecb{v}}_h|_F \in \vecb{P}_{k}(F), \widehat{\vecb{v}}_h = 0 \textrm{ on } \partial \Omega \right\},\\
    Q_h &:= P_{k-1}(\mesh).
\end{align*}
The full HDG scheme seeks $\left((\vecb{u}_h, \widehat{\vecb{u}}_h), (\varrho_h, \hat \varrho)\right)
\in (\vecb{V}_h \times \widehat{\vecb{V}}_h) \times Q_h$ such that
\begin{subequations}\label{eqn:discrete_scheme_fulldg}
\begin{align}
    \nu a_h((\vecb{u}_h, \widehat{\vecb{u}}_h),(\vecb{v}_h, \widehat{\vecb{v}}_h)) 
    + b_h(p(\varrho_h), \vecb{v}_h)
     & = F_h(\vecb{v}_h) + G_h(\varrho_h, \vecb{v}_h), \label{eqn:discrete_scheme_fulldg_abd}\\
     c_h(\varrho_h, \vecb{u}_h, \lambda_h) & = 0, \label{eqn:discrete_scheme_fulldg_c}\\
      (\varrho_h,1) &= M, \label{eqn:discrete_scheme_fulldg_e}
\end{align}
\end{subequations}
for all $(\vecb{v}, \widehat{\vecb{v}}_h) \in \vecb{V}_h \times \widehat{\vecb{V}}_h$
and $\lambda_h \in Q_h$.
Here, the forms are defined by
\begin{align*}
    a_h((\vecb{u}_h, \widehat{\vecb{u}}_h),(\vecb{v}_h, \widehat{\vecb{v}}_h)) & := 
    \sum_{T \in \mathcal{T}} (\nabla \vecb{u}_h, \nabla \vecb{v}_h)_T 
    + (\nabla \vecb{u}_h  \vecb{n}, (\widehat{\vecb{v}}_h - \vecb{v}_h))_{\partial T}\\
    &+ (\nabla \vecb{v}_h \vecb{n}, (\widehat{\vecb{u}}_h - \vecb{u}_h))_{\partial T} 
    + \frac{\alpha k^2}{h} ((\widehat{\vecb{u}}_h - \vecb{u}_h), (\widehat{\vecb{v}}_h - \vecb{v}_h))_{\partial T},\\
    b_h(\varrho_h, (\vecb{v}_h, \hat{\vecb{v}}_h)) & :=
    \sum_{T \in \mathcal{T}} -(\varrho_h, \mathrm{div} \vecb{v}_h)_T + 
    ((\vecb{v}_h - \hat{ \vecb{v}}_h) \cdot \vecb{n}, \varrho_h)_{\partial T},
    \\
    c_h(\varrho_h, (\vecb{u}_h,\hat{\vecb{u}}_h), \lambda_h) 
    &:=-\sum_{T \in \mathcal{T}} (\varrho_h \vecb{u}_h, \nabla \lambda_h)_T
    +  (\hat{\vecb{u}}_h \cdot \vecb{n} \varrho_h^{up}, \lambda_h)_{\partial T}, \\
    G_h(\varrho_h, \vecb{v}_h) & :=
    (\vecb{g}, \varrho_h \vecb{v}_h),\\
    F_h(\vecb{v}_h) & :=
    (\vecb{f}, \vecb{v}_h).
\end{align*}
Compared to \eqref{eqn:discrete_scheme} the missing normal-continuity causes some
changes. In particular, the upwinding term in $c_h$ now involves
$\hat{\vecb{u}}_h \cdot \vecb{n}$, which can be interpreted as a mean value
of the potentially discontinuous flux $\vecb{u}_h \cdot \vecb{n}$. 

Stability and convergence of the scheme can be shown in a similar way
as for the $H(\mathrm{div})$-conforming HDG scheme
\eqref{eqn:discrete_scheme}. Therefore, we only summarize the result
and state the main differences in the proof. Note, that the HDG-norm
now changes to 
\begin{align*} 
    \| (\vecb{u}_h, \hat {\vecb u}_h) \|^2_{1,h} 
    &:= \sum\limits_{T \in \mesh} \| \nabla \vecb u_h \|_T^2 + \frac{1}{h_T} \| (\vecb u_h - \hat{\vecb u}_h) \|_{\partial T}^2,
\end{align*}
but we use the same symbol for simplicity. 

\begin{theorem}[Stability]\label{thm:stability_fulldg}
    For the solution of \eqref{eqn:discrete_scheme_fulldg}, it holds
\begin{align*}
  \| (\vecb{u}_h, \hat{ \vecb{u}}_h) \|_{1,h} & \lesssim \| \vecb{f} \|  + \| \vecb{g} \|_{L^\infty(\Omega)}  \| \varrho_h \|,\\
  \sum_{F \in \facets}  \phi^{\prime \prime}(\varrho^{F}_h) \Big| \int_F \hat{\vecb{u}}_h \cdot \vecb{n}_F \jump{\varrho_h}^2 \Big|  & \lesssim 
  (\| \vecb{f} \|  + \| \vecb{g} \|_{L^\infty(\Omega)}  \| \varrho_h \|)^2,\\
  \Big(1 - \frac{C}{ \cM^2}  \| \vecb{g} \|_{L^\infty(\Omega)} ^2\Big) \| \varrho_h \|^2 & \lesssim \frac{1}{ \cM^2} \| \vecb{f} \|^2 + M^2,
\end{align*}
for some generic constant $C$ that depends on the shape of the cells,
 $\lvert \Omega \rvert$ and $\alpha$, but not on $h$, $\cM$, $M$ or
 $\nu$.
\end{theorem}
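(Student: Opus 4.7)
The strategy closely mirrors the proof of Theorem~\ref{thm:stability}, adapted to the two structural differences in the fully discontinuous scheme: the bilinear form $b_h$ now carries an extra face term, and the convective form $c_h$ uses the single-valued hybrid normal velocity $\hat{\vecb u}_h\cdot\vecb n$ as the upwind driver. Coercivity $\nu\|(\vecb u_h,\hat{\vecb u}_h)\|_{1,h}^2 \lesssim \nu a_h((\vecb u_h,\hat{\vecb u}_h),(\vecb u_h,\hat{\vecb u}_h))$ in the modified HDG norm is classical for (H)DG with full interior penalty (see e.g.\ \cite{cockburn2009unified, L_MTH_2010}), and a discrete Friedrichs inequality for piecewise $H^1$ functions from \cite{Brenner:PFDG} will be used, exactly as in Theorem~\ref{thm:stability}, to bound $F_h(\vecb u_h)$ and $G_h(\varrho_h,\vecb u_h)$ by $\|(\vecb u_h,\hat{\vecb u}_h)\|_{1,h}$.

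The only genuinely new step, and the one I expect to be the main obstacle, is to establish that $b_h(p(\varrho_h),\vecb u_h)\ge 0$ and to identify it with an upwind entropy dissipation driven by $\hat{\vecb u}_h\cdot\vecb n$. I will test \eqref{eqn:discrete_scheme_fulldg_c} with $\lambda_h=\phi'(\varrho_h)=\cM(1+\log\varrho_h)$, where $\phi(s)=\cM s\log s$ is the same convex entropy as before (valid in $Q_h$ for $k=1$, where $\varrho_h$ is piecewise constant). Using the pointwise chain-rule identity $\varrho_h\,\nabla\phi'(\varrho_h)=\nabla p(\varrho_h)$ and element-wise integration by parts, the volume contribution of $c_h$ rewrites as $\sum_T(p(\varrho_h),\mathrm{div}\,\vecb u_h)_T-\sum_T(\vecb u_h\cdot\vecb n,p(\varrho_h))_{\partial T}$. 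Adding this to $b_h(p(\varrho_h),\vecb u_h)$, the volume terms and the $\vecb u_h\cdot\vecb n$ face terms cancel exactly, leaving
\begin{equation*}
b_h(p(\varrho_h),\vecb u_h)+c_h(\varrho_h,\vecb u_h,\phi'(\varrho_h)) = \sum_{T\in\mesh}\bigl(\hat{\vecb u}_h\cdot\vecb n,\ \varrho_h^{up}\phi'(\varrho_h)-\varrho_h\phi'(\varrho_h)+\phi(\varrho_h)\bigr)_{\partial T}.
\end{equation*}
Because $\hat{\vecb u}_h\cdot\vecb n$ is single-valued on every facet, Lemma~\ref{lem:upwinding_estimate} applies verbatim with $\hat{\vecb u}_h$ playing the role of $\vecb u$, and the right-hand side equals the non-negative upwind sum $\tfrac12\sum_F\phi''(\varrho_h^F)\big|\int_F\hat{\vecb u}_h\cdot\vecb n_F\jump{\varrho_h}^2\big|$. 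Since $c_h(\varrho_h,\vecb u_h,\phi'(\varrho_h))=0$ by \eqref{eqn:discrete_scheme_fulldg_c}, this simultaneously delivers $b_h(p(\varrho_h),\vecb u_h)\ge 0$ and the second stated estimate, and it explains the design choice to upwind with $\hat{\vecb u}_h\cdot\vecb n$.

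The first estimate then follows by testing \eqref{eqn:discrete_scheme_fulldg_abd} with $(\vecb u_h,\hat{\vecb u}_h)$, applying coercivity of $a_h$ to drop the non-negative $b_h(p(\varrho_h),\vecb u_h)$, and estimating the right-hand side via the Friedrichs inequality. For the third estimate I will invoke the LBB-stability of the pair $(\vecb V_h\times\hat{\vecb V}_h,Q_h\cap L^2_0(\Omega))$ with respect to $b_h$ and the new norm, which is a standard inf-sup result for fully discontinuous HDG Stokes pairs (see e.g.\ \cite{cockburn2009unified, L_MTH_2010, MR3867384}). The only additional point to verify is that $b_h(\bar p_h,(\vecb v_h,\hat{\vecb v}_h))=0$ for a constant $\bar p_h$, which follows from the element-wise divergence theorem together with the single-valuedness of $\hat{\vecb v}_h\cdot\vecb n$ across facets and $\hat{\vecb v}_h=0$ on $\partial\Omega$. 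With an inf-sup test $(\vecb v_h,\hat{\vecb v}_h)$ satisfying $\|(\vecb v_h,\hat{\vecb v}_h)\|_{1,h}\lesssim\|p_h-\bar p_h\|$ in hand, I substitute it into \eqref{eqn:discrete_scheme_fulldg_abd}, use the already established velocity bound to absorb the $a_h$ contribution, and close via the Pythagoras argument using the mass constraint $\bar p_h=\cM M|\Omega|^{-1}$, reproducing the structure of Theorem~\ref{thm:stability} verbatim.
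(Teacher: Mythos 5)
Your proposal is correct and follows essentially the same route as the paper: the paper's proof likewise applies Lemma~\ref{lem:upwinding_estimate} with $\vecb{u} = \hat{\vecb{u}}_h$ and $\phi(s) = \cM s \log(s)$ to obtain the identity $c_h(\varrho_h, \vecb{u}_h, \cM(1+\log\varrho_h)) + b_h(\varrho_h,(\vecb{u}_h,\hat{\vecb{u}}_h)) = \tfrac12\sum_{F}\phi''(\varrho_h^F)\bigl|\int_F \hat{\vecb{u}}_h\cdot\vecb{n}_F\jump{\varrho_h}^2\bigr|$, and then declares the remaining arguments identical to Theorem~\ref{thm:stability}. You merely spell out details the paper leaves implicit (the element-wise integration by parts showing the $\vecb{u}_h\cdot\vecb{n}$ face terms cancel so that only $\hat{\vecb{u}}_h\cdot\vecb{n}$ drives the upwind dissipation, and the inf-sup condition for the fully discontinuous pair in the modified norm), all of which are accurate.
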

\begin{proof}
The main difference for the stability proof compared to the one of Theorem~\ref{thm:stability}
is that Lemma~\ref{lem:upwinding_estimate} is employed for
$\vecb{u} = \hat{\vecb{u}}_h$ and $\phi(s) = \cM s \log(s)$ which yields
\begin{align*}
    c_h(\varrho_h, \vecb{u}_h, \cM(1 + \log(\varrho_h))) 
    + b_h(\varrho_h, (\vecb{u}_h, \hat{\vecb{u}}_h)) =
    \frac{1}{2} \sum_{F \in \facets}  \phi^{\prime \prime}(\varrho^{F}_h) \Big| \int_F \hat{\vecb{u}}_h \cdot \vecb{n}_F \jump{\varrho_h}^2 \Big|.
\end{align*}
The rest of the arguments is identical.
\end{proof}

\begin{theorem}
    Consider a sequence of shape-regular triangulations $(\mathcal{T}_h)_{h \rightarrow 0}$.
    Let $(\vecb{u}_h, \widehat{\vecb{u}}_h, \varrho_h)
    $ denote the corresponding discrete solution of \eqref{eqn:discrete_scheme_fulldg} on $\mathcal{T}_h$. Then, up to extraction of a
    subsequence, it holds
    \begin{itemize}
        \item[(i)] the sequence $(\vecb{u}_h)_{h \rightarrow 0}$ converges strongly to some $\vecb{u} \in \vecb{L}^2(\Omega) \cap \vecb{H}^1_0(\Omega)$ and 
        $\nabla \vecb{u}_h + R_h(\widehat{\vecb u}_h + S_h \vecb{u}_h) \rightharpoonup \nabla \vecb{u}$,
        \item[(ii)] the sequence $(\varrho_h)_{h \rightarrow 0}$ converges weakly in $L^2(\Omega)$ to a limit $\varrho \in L^2(\Omega)$,
        \item[(iii)] the sequence $(p_h)_{h \rightarrow 0} := (p(\varrho_h))_{h \rightarrow 0}$ converges weakly in $L^2(\Omega)$ to a limit $p_\star \in L^2(\Omega)$, 
        \item[(iv)]  $p_\star$ and $\varrho$ satisfy the equation of state, i.e., $p_\star = p(\varrho)$, 
        \item[(v)] the limit $(\vecb{u}, \varrho)$ is a weak solution of \eqref{eqn:continuous_problem}.
    \end{itemize}
\end{theorem}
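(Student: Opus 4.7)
The plan is to follow the proof of Theorem~\ref{thm:convergence} essentially verbatim for (i)--(iv), and to carry out the continuity equation limit in (v) with extra care, since the relaxation of $H(\div)$-conformity means that $\varrho_h \vecb{u}_h$ is no longer close to an $H(\div)$-conforming momentum via a simple elementwise Raviart--Thomas interpolation of the velocity alone.

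For (i)--(iv), the stability estimates of Theorem~\ref{thm:stability_fulldg} supply the $L^2$ bound on $\vecb{u}_h$ and the weak compactness of $\varrho_h$ in $L^2(\Omega)$ exactly as before. Applying the vector-valued version of the Rellich-type compactness result from \cite{MR2931412}, with the lifting now incorporating the full trace $S_h \vecb{u}_h$ rather than only its tangential component, yields (i). Items (ii) and (iii) are immediate from $L^2$ weak compactness and the upper bound on $\| \varrho_h \|$. Item (iv) is unchanged, since it uses only linearity of $p$ and weak-strong convergence against $L^2$-projections of $C_c^\infty$ test functions.

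For the momentum equation in (v), the key simplification here is that $\vecb{V}_h = \vecb{P}_k(\mesh)$ now contains continuous piecewise polynomials. Given $\vecb{v} \in \vecb{C}_c^\infty(\Omega)$, I would take $\vecb{v}_h$ to be a continuous $\vecb{P}_k$ best approximation of $\vecb{v}$ and set $\widehat{\vecb{v}}_h := \vecb{v}_h|_{\facets}$, so that $(\vecb{v}_h - \widehat{\vecb{v}}_h)|_{\partial T} = 0$. This kills the consistency and stabilization contributions in $a_h$ and reduces $b_h$ to the integral of $-\div(\vecb{v}_h)\,p(\varrho_h)$ over $\Omega$, after which passage to the limit for $a_h$, $b_h$, $F_h$ and $G_h$ proceeds exactly as in Theorem~\ref{thm:convergence}, using (i)--(iv).

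The main obstacle is the continuity equation. I would again introduce $\psi \in C^\infty(\Omega)$, a continuous $P_1$ best-approximation $\psi_h$ satisfying the inverse-inequality bound on $\|\nabla \psi_h\|_{L^\infty}$, and construct $\vecb{q}_h \in \text{RT}_0 \subset \vecb{H}(\div,\Omega)$ by prescribing its normal fluxes facet-wise as
\begin{align*}
\vecb{q}_h \cdot \vecb{n}_F := \varrho_h^{\mathrm{up}}\, \widehat{\vecb{u}}_h \cdot \vecb{n}_F,
\end{align*}
which is well-defined because $\widehat{\vecb{u}}_h$ is single-valued on each facet. Testing $c_h$ with indicator functions of elements shows $\div \vecb{q}_h = 0$, and so $(\vecb{q}_h, \nabla \psi_h) = 0$. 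The crucial step is to bound
\begin{align*}
|(\vecb{q}_h - \varrho_h \vecb{u}_h, \nabla \psi_h)|
\leq |(\vecb{q}_h - \varrho_h I_h^{\mathrm{RT}_0}\vecb{u}_h, \nabla \psi_h)| + |(\varrho_h(I_h^{\mathrm{RT}_0}\vecb{u}_h - \vecb{u}_h), \nabla \psi_h)|.
\end{align*}
The second term is handled by interpolation estimates for $I_h^{\mathrm{RT}_0}$ and the stability of $\|\varrho_h\|$ and $\|(\vecb{u}_h, \widehat{\vecb{u}}_h)\|_{1,h}$, exactly as in \eqref{eq:critical_estimate}. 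For the first term, a BDM/RT bound gives, facet-wise,
\begin{align*}
\|\vecb{q}_h - \varrho_h I_h^{\mathrm{RT}_0} \vecb{u}_h\|_{L^1(T)}
\lesssim h_T \sum_{F \subset \partial T} \Bigl| \int_F \bigl((\varrho_h^{\mathrm{up}} - \varrho_h|_T)\widehat{\vecb{u}}_h + \varrho_h|_T(\widehat{\vecb{u}}_h - \vecb{u}_h|_T)\bigr)\cdot \vecb{n}_F \Bigr|.
\end{align*}
The first part reproduces the estimate \eqref{eqn:estimateA_case1} with $\widehat{\vecb{u}}_h$ in place of $\vecb{u}_h$ and is controlled by the upwind stabilization from Theorem~\ref{thm:stability_fulldg} together with the trace/inverse bound \eqref{eqn:bound_for_uF} applied to $\widehat{\vecb{u}}_h$. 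The new second part is bounded via Cauchy--Schwarz by $\|\varrho_h\|$ times $h_T^{1/2}\,h_T^{-1/2}\|\vecb{u}_h - \widehat{\vecb{u}}_h\|_{\partial T}$, which yields an $O(h^{1/2})$ global bound through the HDG norm on $(\vecb{u}_h, \widehat{\vecb{u}}_h)$. Summing and combining with weak-strong convergence of $\varrho_h \vecb{u}_h \cdot \nabla \psi_h$ then closes the argument, as in the $H(\div)$-conforming case.
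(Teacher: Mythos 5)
Your proposal is correct and follows the paper's proof essentially verbatim: the same modified flux $\vecb{q}_h$ (your facet-wise prescription $\vecb{q}_h \cdot \vecb{n}_F = \varrho_h^{up}\, \hat{\vecb{u}}_h \cdot \vecb{n}_F$ is exactly the paper's $I_h^{\mathrm{RT}_0}(\varrho_h^{up}\hat{\vecb{u}}_h)$ for lowest-order Raviart--Thomas), the same divergence-free observation via element indicators, and the same two error sources (upwind-density mismatch controlled by the stabilization of Theorem~\ref{thm:stability_fulldg}, and the normal jump $(\hat{\vecb{u}}_h - \vecb{u}_h)\cdot\vecb{n}$ controlled by the HDG norm). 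The only difference is cosmetic: you route the normal-jump correction through the elementwise $L^1$ facet term using $I_h^{\mathrm{RT}_0}\vecb{u}_h$ as intermediate quantity, whereas the paper absorbs it into the global $L^2$ term $\|\varrho_h\|\,\|I_h^{\mathrm{RT}_0}\hat{\vecb{u}}_h - \vecb{u}_h\|$ --- a trivially different arrangement of the same triangle inequality yielding the same $O(h^{1/2})$ (in fact $O(h)$) decay.
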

\begin{proof}
    Again, only the main differences compared to the proof of Theorem~\ref{thm:convergence}
    are stated.
First, the choice of \eqref{eq:HdivInterpolation_rhou} has to be altered to
\begin{equation*}
    \vecb{q}_h|_T := I_h^{\mathrm{RT}_0} (\varrho_{\text{upw}} \widehat{\vecb{u}}_h|_T)
    \quad \text{on each } T \in \mathcal{T}
  \end{equation*}
  which then again is divergence-free. In the critical estimate \eqref{eq:critical_estimate}
  one now obtains
  \begin{equation*}
    \begin{split}
      \left | ( \vecb{q}_h - \varrho_h \vecb{u}_h, \nabla \psi_h) \right |
        & \leq  \sum_T \left | \nabla \psi_h|_T \cdot \left ( \int_T (\vecb{q}_h - \varrho_h I_h^{\mathrm{RT}_0} \hat{\vecb{u}}_h) \,  + \int_T \varrho_h (I_h^{\mathrm{RT}_0} \hat{\vecb{u}}_h - \vecb{u}_h) \,   \right ) \right | \\
        & \leq
            C \sum_T \| \vecb{q}_h - \varrho_h I_h^{\mathrm{RT}_0} \hat{\vecb{u}}_h \|_{L^1(T)} + C \| \varrho_h \| \, \| I_h^{\mathrm{RT}_0} \hat{\vecb{u}}_h
            -  \vecb{u}_h \|.
        \end{split}
    \end{equation*}
    Here, the first term is treated as in the other proof and the second term can be bounded by
    \begin{align*}
        \| I_h^{\mathrm{RT}_0} \hat{\vecb{u}}_h -  \vecb{u}_h \|^2
   \lesssim \sum_F h \left(\int_F (\hat{\vecb{u}}_h -  \vecb{u}_h) \cdot \vecb{n} ds \right)^2
   \lesssim \sum_T h^3 \| (\vecb u_h - \hat{\vecb u}_h) \|_{\partial T}^2 \leq h^4 \| (\vecb u_h - \hat{\vecb u}_h) \|_{1,h}^2.
    \end{align*}
    The rest of the proof is identical.
\end{proof}

\begin{remark}[Non-gradient-robustness]
    The scheme \eqref{eqn:discrete_scheme_fulldg} is in general not gradient-robust. The reason
    is that the incompressible Stokes subproblem for the divergence-free part of the solution
    is not pressure-robust, since the integration by parts in the right-hand side of \eqref{eqn:incompressible_stokes_subproblem}
    is not possible without additional jump terms due to the relaxed $H(\mathrm{div})$-conformity.
\end{remark}

\section{Numerical examples}\label{sec:numerics}
This section studies three numerical examples to compare the two
variants with respect to the importance of gradient-robustness
and experimental convergence rates.

\subsection{Convergence rates}

In this section we want to discuss and analyze the approximation
properties and error convergence rates of our methods. For this consider
the gradient field $\nabla \Psi := (0, -y^2)^T$, and choose $\varrho$
such that it solves the hydrostatic equation \eqref{eq:hydrostatic_balance}, i.e.,
\begin{align} \label{eq::hydro_numeric}
    \cM \nabla \varrho = \varrho \nabla \Psi 
    \quad \Rightarrow \quad 
    \varrho := e^{-y^{3} / (3 \cM)} c^{-1}_\Omega, 
    \quad 
    \text{with}
    \quad
    c_\Omega = \int_\Omega e^{-y^{3}/(3 \cM) },
\end{align}
which results in a mass constraint $M = 1$. Further let $\zeta
= 100 x^2(1-x)^2y^2(1-y)^2 $ and $\vecb{u} := (-\partial_y \zeta,
\partial_x \zeta)/\varrho$, then we define the driving forces
\begin{align*}
    \vecb{g} :=  
    \nabla \Psi,
     \quad 
     \text{and}
     \quad
     \vecb{f} := - \nu \Delta \vecb{u}.
\end{align*}
By construction we then have $\div(\varrho \vecb{u}) =
\div (\partial_y \zeta, \partial_x \zeta) = 0$, thus $\varrho$ and
$\vecb{u}$ are solutions of the equations
\eqref{eqn:continuous_problem}. 

In the following we compare 
\begin{itemize}
    \item the normal continuous approximation $\uhdiv$, i.e. the solution of \eqref{eqn:discrete_scheme}, and
    \item the fully discontinuous approximation $\uhdg$, i.e. the solution of \eqref{eqn:discrete_scheme_fulldg}.
\end{itemize}
We consider the cases $\nu \in \{1, 10^{-6}\}$ and $\cM \in \{1,
100\}$. For all pairs of parameters $(\nu, \varrho)$
Figures~\ref{fig::vortex_c1_n0}-\ref{fig::vortex_c100_n6} show the
convergence history of the $L^2$ error for the velocity and the
density as well as the discrete $H^1$ error of the velocity for
polynomial orders $k \in \lbrace 1,2,3 \rbrace$. To improve the readability we simplify
the notation for the discrete $H^1$ error by
\begin{align*}
    \| \vecb{u} - \vecb{u}_h^\bullet \|_{1,h} := \| (\vecb{u}, \vecb{u}|_\facets) - (\vecb{u}_h^\bullet, \widehat{\vecb{u}}^\bullet_h)\|_{1,h}.
\end{align*}
We use an initial mesh with 96 elements and a uniform refinement.
Note that although we have only proven stability for the lowest order case,
the higher order cases were stable for all computations. 
In the following we discuss the results in more detail:

\begin{itemize}
    \item The \textbf{compressible} case $\nu = 1, \cM = 1$, Figure \ref{fig::vortex_c1_n0}: In this case we do not expect a big
    difference between the approximations $\uhdg$ and $\uhdiv$. Indeed, all errors are close to each other and we observe an optimal convergence rate.
    \item The \textbf{compressible} case $\nu = 10^{-6}, \cM = 1$,
    Figure \ref{fig::vortex_c1_n6}: As expected, the velocity errors
    still converge with optimal order but are deteriorated by the
    small viscosity. Surprisingly, though still being slightly
    shifted, the normal continuous approximation $\uhdiv$ is less
    effected. A similar observation is made for the density error.
    While still converging with optimal order it scales with the
    viscosity. In contrast to the velocity approximation no difference
    between $\varrhohdg$ and $\varrhodiv$ can be found. 
    \item The (nearly) \textbf{incompressible cases} $\nu = 1, \cM =
    100$, Figure \ref{fig::vortex_c100_n0} and $\nu = 10^{-6}, \cM =
    100$, Figure \ref{fig::vortex_c100_n6}: Since $\cM = 100 \Rightarrow
    M\!a \approx 10^{-4}$, this setting can be considered as nearly
    incompressible, thus we expect the result to behave as for a
    discretization of the incompressible Stokes equations. Indeed,
    while all errors converge with optimal order, only the normal
    continuous approximation $\uhdiv$ is pressure-robust, i.e. robust
    with respect to a small viscosity. This is in agreement with the
    literature regarding exactly divergence-free approximations, see
    for example \cite{MR3826676}.
\end{itemize}

\begin{figure}
    \includegraphics[width=\textwidth]{./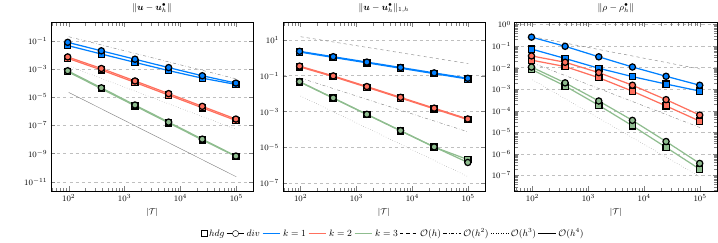}
    \caption{Convergence rates for $k \in \lbrace 1,2,3 \rbrace$ for the unit square test with $\nu = 1$ and $\cM = 1$. }
    \label{fig::vortex_c1_n0}
\end{figure}

\begin{figure}
    \includegraphics[width=\textwidth]{./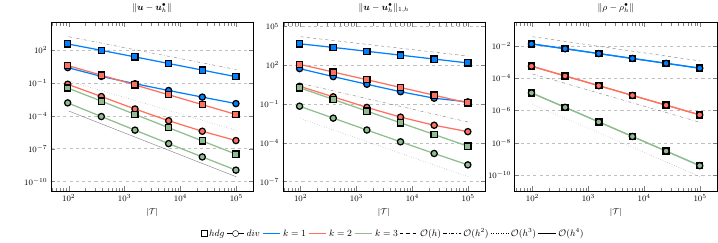}
    \caption{Convergence rates for $k \in \lbrace 1,2,3 \rbrace$ for the unit square test with $\nu = 10^{-6}$ and $\cM = 1$. }
    \label{fig::vortex_c1_n6}
\end{figure}

\begin{figure}
    \includegraphics[width=\textwidth]{./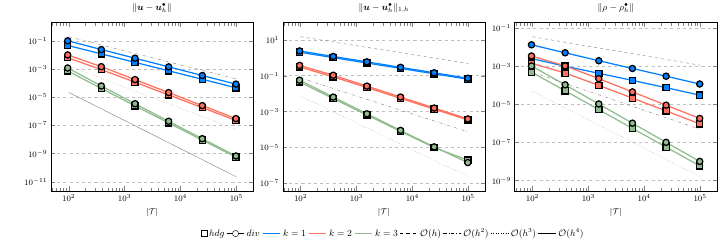}
    \caption{Convergence rates for $k \in \lbrace 1,2,3 \rbrace$ for the unit square test with $\nu = 1$ and $\cM = 100$. }
    \label{fig::vortex_c100_n0}
\end{figure}

\begin{figure}
    \includegraphics[width=\textwidth]{./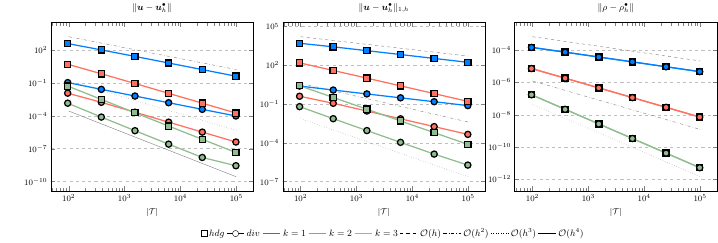}
    \caption{Convergence rates for $k \in \lbrace 1,2,3 \rbrace$ for the unit square test with $\nu = 10^{-6}$ and $\cM = 100$. }
    \label{fig::vortex_c100_n6}
\end{figure}

In contrast to the above setting one could also choose the right hand side as
\begin{align} \label{eq::allg}
    \vecb{g} :=  
    \nabla \Psi  - \frac{\nu \Delta \vecb{u}}{\varrho},
    \quad
    \text{and}
    \quad
    \vecb{f} := 0.
\end{align}
In Figure \ref{fig::vortex_c1_n6_allg} we have plotted the errors for
the case $\cM = 1$ and $\nu = 10^{-6}$. As one can see (compared to
Figure~\ref{fig::vortex_c1_n6}) there is no influence with respect to
the choice of the right hand side. For the other cases we observe the
same results, thus the plots are omitted for simplicity. Note, that
moving the gradient forces $\nabla \Psi$ also to $\vecb{f}$ results in a
big difference, see next example. 

\begin{figure}
    \includegraphics[width=\textwidth]{./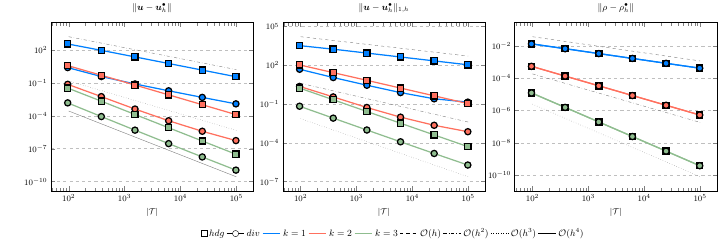}
    \caption{Convergence rates for $k \in \lbrace 1,2,3 \rbrace$ for the unit square test with $\nu = 10^{-6}$ and $c_M = 1$ and right hand side \eqref{eq::allg}.}
    \label{fig::vortex_c1_n6_allg}
\end{figure}

\subsection{Atmosphere at rest over a mountain}
 
Consider the \textit{mountain} function given by
\begin{align*}
    \mathcal{M}(y) :=  0.3 e^{\frac{- (y - 0.4)^2}{0.08^2}} + 0.2 e^{\frac{- (y - 0.6)^2}{0.1^2}},
\end{align*}
then we have the domain $\Omega := \{(x,y) \in \mathbb{R}^2: 0 < x <
1, \mathcal{M}(x) < y <1 \}$. We aim to approximate a hydrostatic
balance with the exact solution $\vecb{u} = 0$ and $\varrho$ as in the
previous example (note that $c_\Omega$ in \eqref{eq::hydro_numeric}
changes in order to get $M=1$). This gives $\vecb g = \nabla \Psi =
(0, -y^2)^T$. In Figure~\ref{fig::mountain_pic_div} and
Figure~\ref{fig::mountain_pic_hdg} we have plotted the the absolute
value of the velocity $\uhdiv, \uhdg$ and the density
$\varrhodiv,\varrhohdg$ for $\nu \in \lbrace 1, 10^{-6} \rbrace$ and $\cM = 1$ and $k =
3$. Unfortunately, as expected, the velocity error has a dependency on
the viscosity $\nu$ similarly as in the previous example, and the
normal continuous approximation gives a much better approximation. In
Figure~\ref{fig::mountain_c1_n0} and Figure~\ref{fig::mountain_c1_n6}
we give the error history for the mountain example for the same test
cases. In contrast to the previous example we did not use nested
meshes for the calculation (via a uniform refinement) of errors but
used $h_{max} = 0.3 / 2^i$ for $i = 0,\ldots,5$ for the mesh
generator. In addition we used a local mesh size $h_{loc} = 0.01$ in
order to properly capture the geometry of the mountain (as can be seen
in Figure \ref{fig::mountain_c1_n0} and Figure
\ref{fig::mountain_c1_n6}). During the mesh generation we used
$h_{loc}$ at the mountain surface if $h_{loc} < h_{max}$, and
$h_{max}$ otherwise. Thus, although the mesh is initially not quasi
uniform, it results in a quasi uniform mesh on later levels. As one
can see, this results in an initially fast pre-asymptotic decrease in
the error, while the same convergence rate as in the previous example
is observed later on. For the highest order case $k = 3$ we see a
stagnation of the error close to machine precision. Similarly as in
the previous example, for $\cM=100$ we only see an improvement for the
gradient-robust solution $\uhdiv$, which is plotted in
Figure~\ref{fig::mountain_pic_div_bigc}. Also note that one can now
clearly see that the density converges to a constant function as
expected for a Mach number $M\!a \approx \cM^{-2} = 10^{-4}$. We have
omitted the results for the fully discontinuous solution $\uhdg$ for
$\cM = 100$ as the results look similar to
Figure~\ref{fig::mountain_pic_hdg} (which is the result for $\cM =
1$). In particular the magnitude of the velocity error does not
improve with larger $\cM$. This is in accordance to the findings of
the previous example.


\begin{figure}
    \begin{center}
    \includegraphics[height=4cm]{./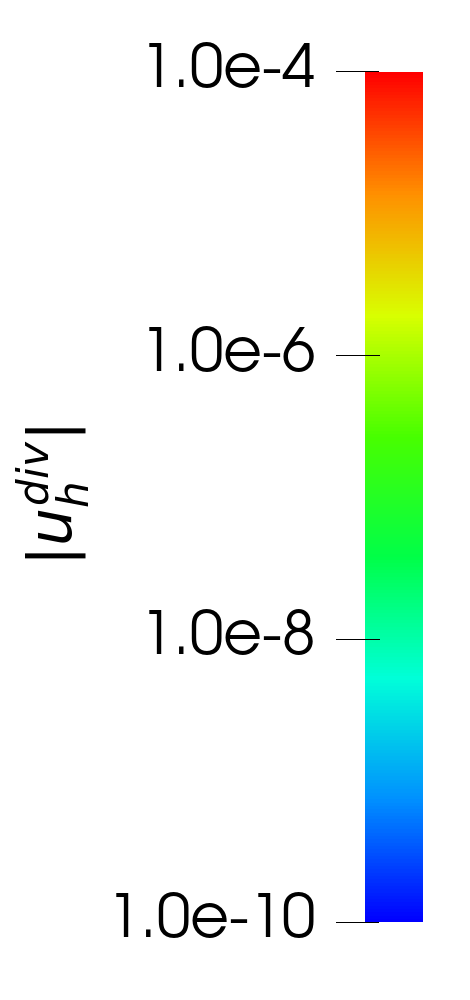}
    \includegraphics[height=4cm]{./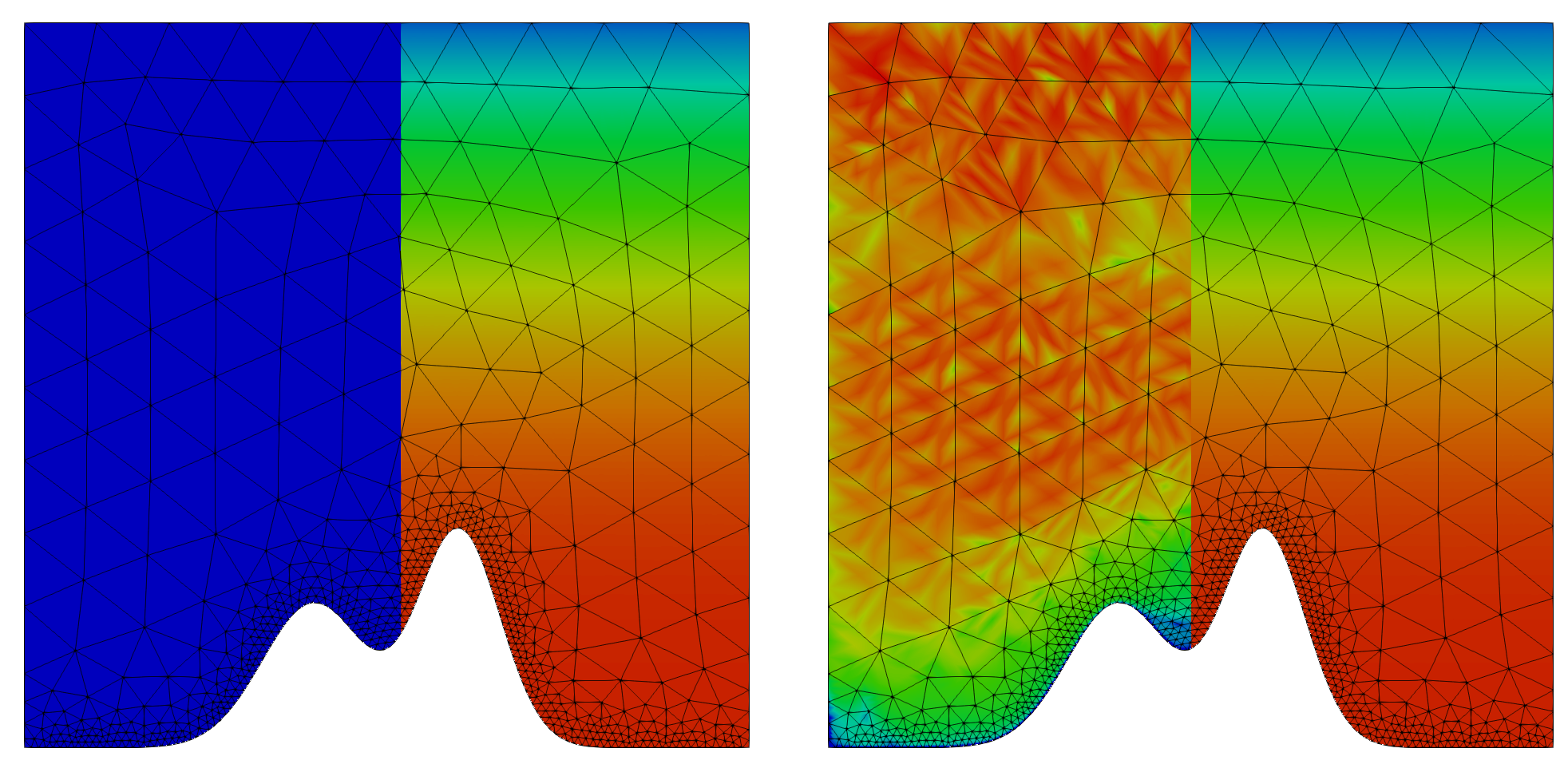}
    \hspace{0.5mm}
    \includegraphics[height=4cm]{./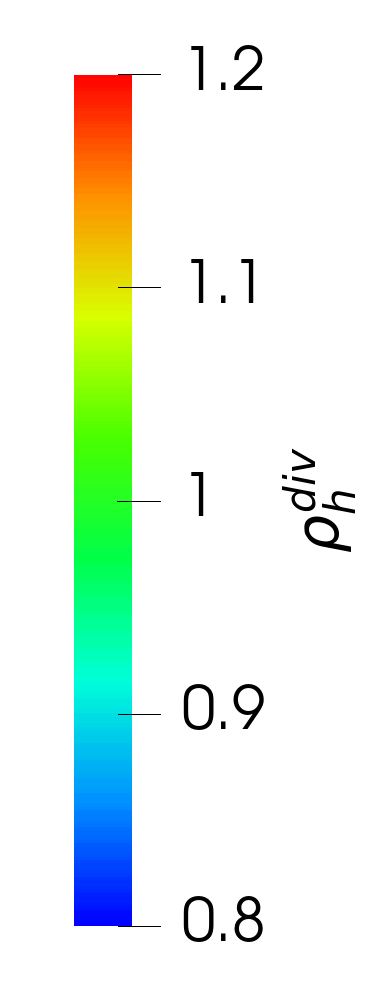}
\end{center}
    \caption{Absolute value of the velocity $| \uhdiv|$ (using a
    logarithmic scale) plotted on the left side of each plot, and
    density $\varrhodiv$ plotted on the right side of each plot, for
    $k = 3$ for the mountain example with $\nu = 1$ and $c_M = 1$
    (left) and $\nu = 10^{-6}$ and $c_M = 1$ (right). }
    \label{fig::mountain_pic_div}
\end{figure}

\begin{figure}
    \begin{center}
    \includegraphics[height=4cm]{./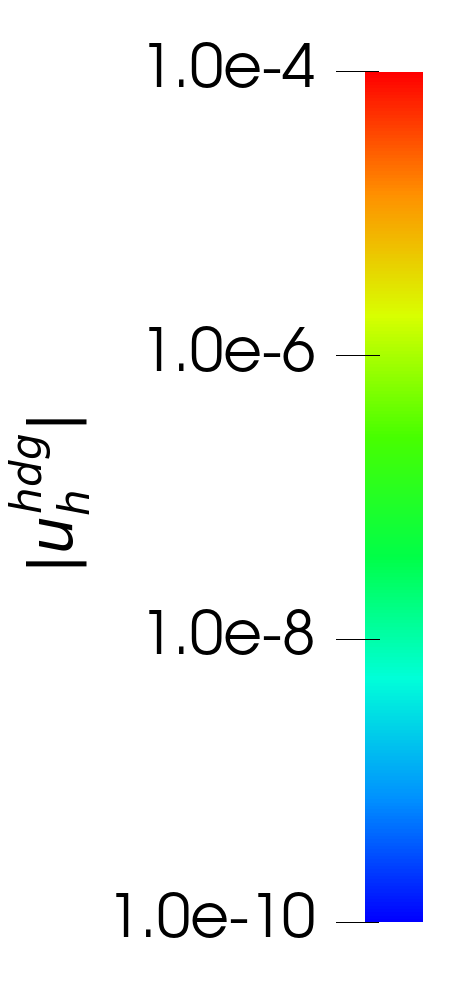}
    \includegraphics[height=4cm]{./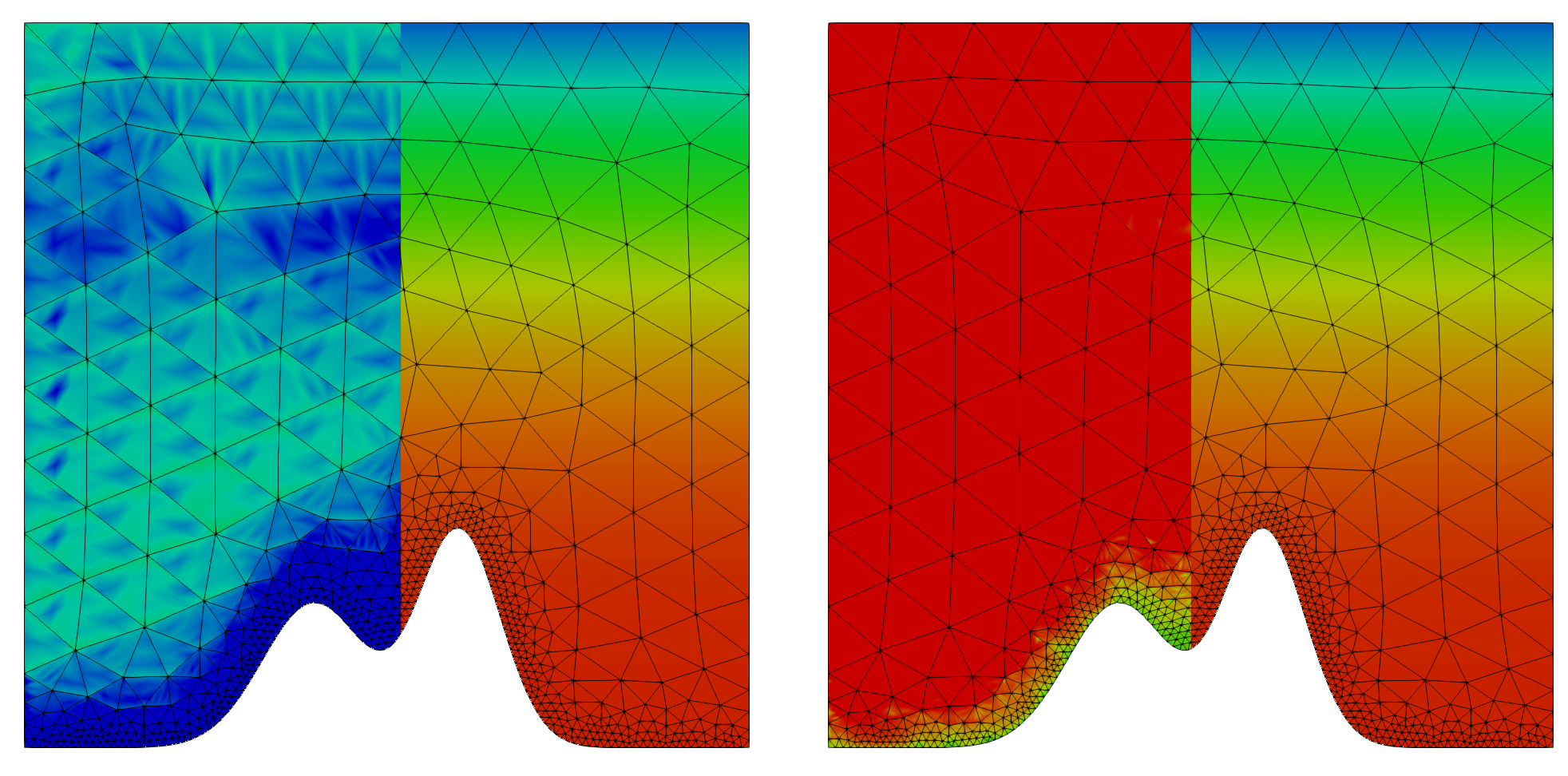}
    \hspace{0.5mm}
    \includegraphics[height=4cm]{./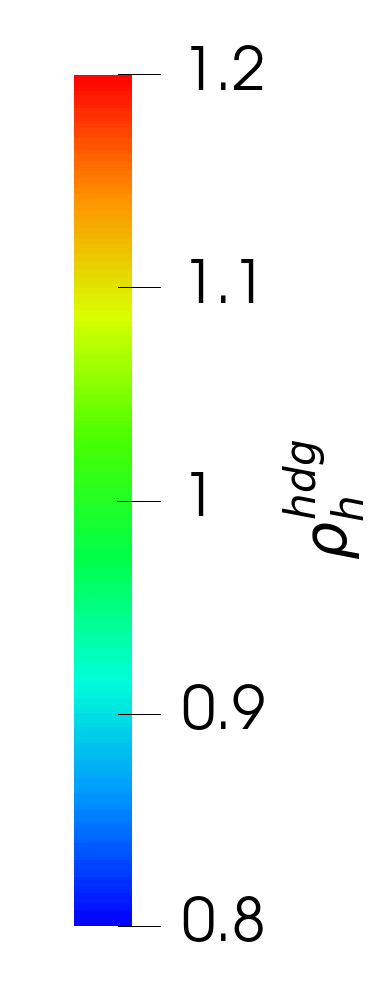}
\end{center}
    \caption{Absolute value of the velocity $| \uhdg|$ (using a
    logarithmic scale) plotted on the left side of each plot, and
    density $\varrhohdg$ plotted on the right side of each plot, for
    $k = 3$ for the mountain example with $\nu = 1$ and $c_M = 1$
    (left) and $\nu = 10^{-6}$ and $c_M = 1$ (right). }
    \label{fig::mountain_pic_hdg}
\end{figure}

\begin{figure}
    \begin{center}
    \includegraphics[height=4cm]{./picture/scale_vel_hdiv_hdg.png}
    \includegraphics[height=4cm]{./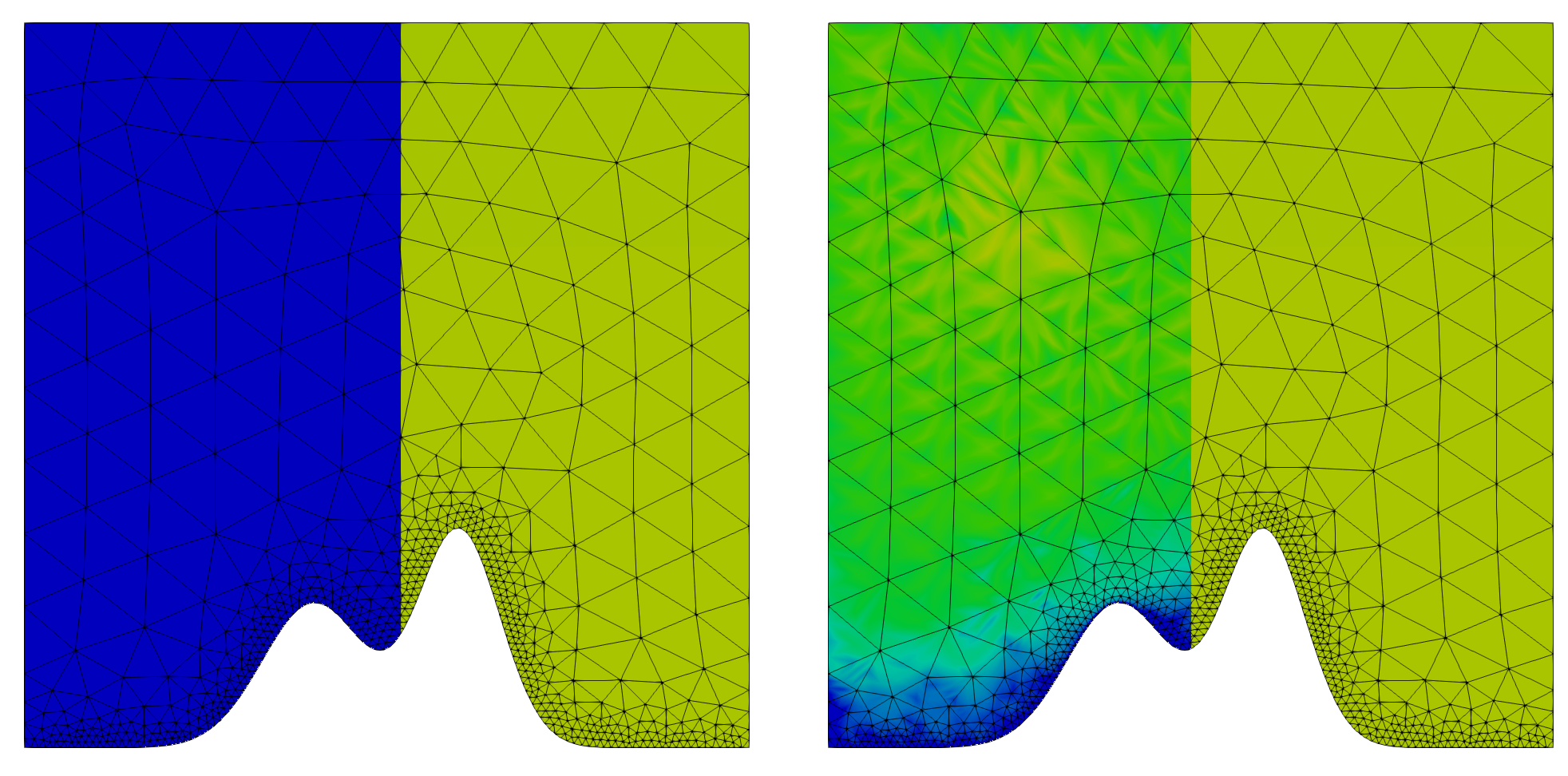}
    \hspace{0.5mm}
    \includegraphics[height=4cm]{./picture/scale_rho_hdiv_hdg.png}
\end{center}
    \caption{Absolute value of the velocity $| \uhdiv|$ (using a
    logarithmic scale) plotted on the left side of each plot, and
    density $\varrhodiv$ plotted on the right side of each plot, for
    $k = 3$ for the mountain example with $\nu = 1$ and $c_M = 100$
    (left) and $\nu = 10^{-6}$ and $c_M = 100$ (right). }
    \label{fig::mountain_pic_div_bigc}
\end{figure}

\begin{figure}
    \includegraphics[width=\textwidth]{./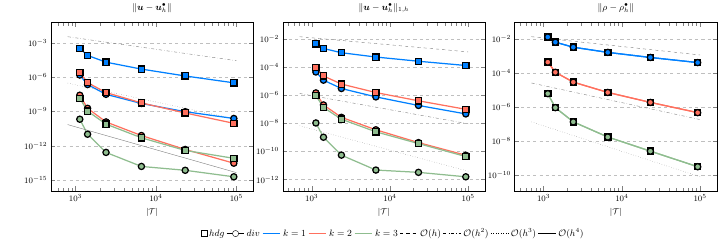}
    \caption{Convergence rates for $k \in \lbrace 1,2,3 \rbrace$ for the mountain example with $\nu = 1$ and $c_M = 1$. }
    \label{fig::mountain_c1_n0}
\end{figure}

\begin{figure}
    \includegraphics[width=\textwidth]{./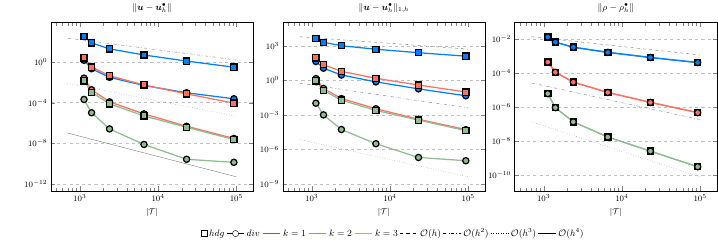}
    \caption{Convergence rates for $k \in \lbrace 1,2,3 \rbrace$ for the mountain example with $\nu = 10^{-6}$ and $c_M = 1$.}
    \label{fig::mountain_c1_n6}
\end{figure}

Finally we present the results if the right hand side is set to
\begin{align} \label{eq::allf_mountain}
    \vecb{g}=0,
    \quad
    \text{and}
    \quad
    \vecb{f} =  \varrho \nabla \Psi,
\end{align}
still with the same solution after \eqref{eq::hydro_numeric}.
Since $\vecb{f}$ is a gradient field, we expect that the normal
continuous solution $\uhdiv$ is not effected by the right hand side
for all viscosities due to its gradient-robustness.  
Indeed, in Figure~\ref{fig::mountain_pic_div_allf} and
Figure~\ref{fig::mountain_pic_hdg_allf} we have again plotted the
solutions. As predicted, the solution $\uhdiv$ is always zero (up to
machine precision) while the non-pressure robust solution $\uhdg$ is
still affected by a decrease of the viscosity and gives the same
results as before (compare to Figure~\ref{fig::mountain_pic_hdg}).

\begin{figure}
    \begin{center}
    \includegraphics[height=4cm]{./picture/scale_vel_hdiv_hdg.png}
    \includegraphics[height=4cm]{./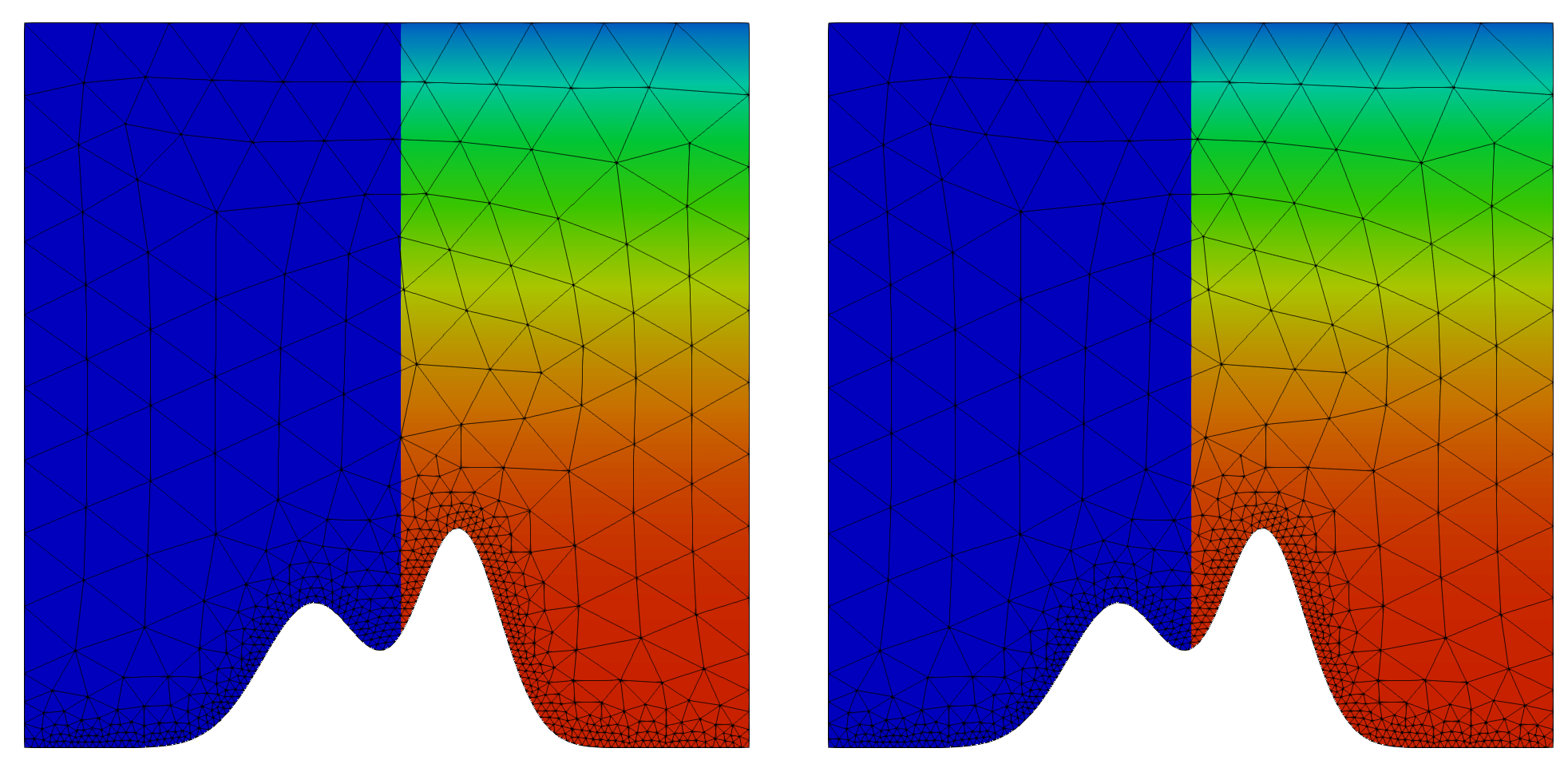}
    \hspace{0.5mm}
    \includegraphics[height=4cm]{./picture/scale_rho_hdiv_hdg.png}
\end{center}
    \caption{Absolute value of the velocity $|\uhdiv|$ (using a
    logarithmic scale) plotted on the left side of each plot, and
    density $\varrhodiv$ plotted on the right side of each plot, for
    $k = 3$ for the mountain example with $\nu = 1$ and $c_M = 1$
    (left) and $\nu = 10^{-6}$ and $c_M = 1$ (right) and the right
    hand side according to \eqref{eq::allf_mountain}. }
    \label{fig::mountain_pic_div_allf}
\end{figure}

\begin{figure}
    \begin{center}
    \includegraphics[height=4cm]{./picture/scale_vel_hdiv_hdg.png}
    \includegraphics[height=4cm]{./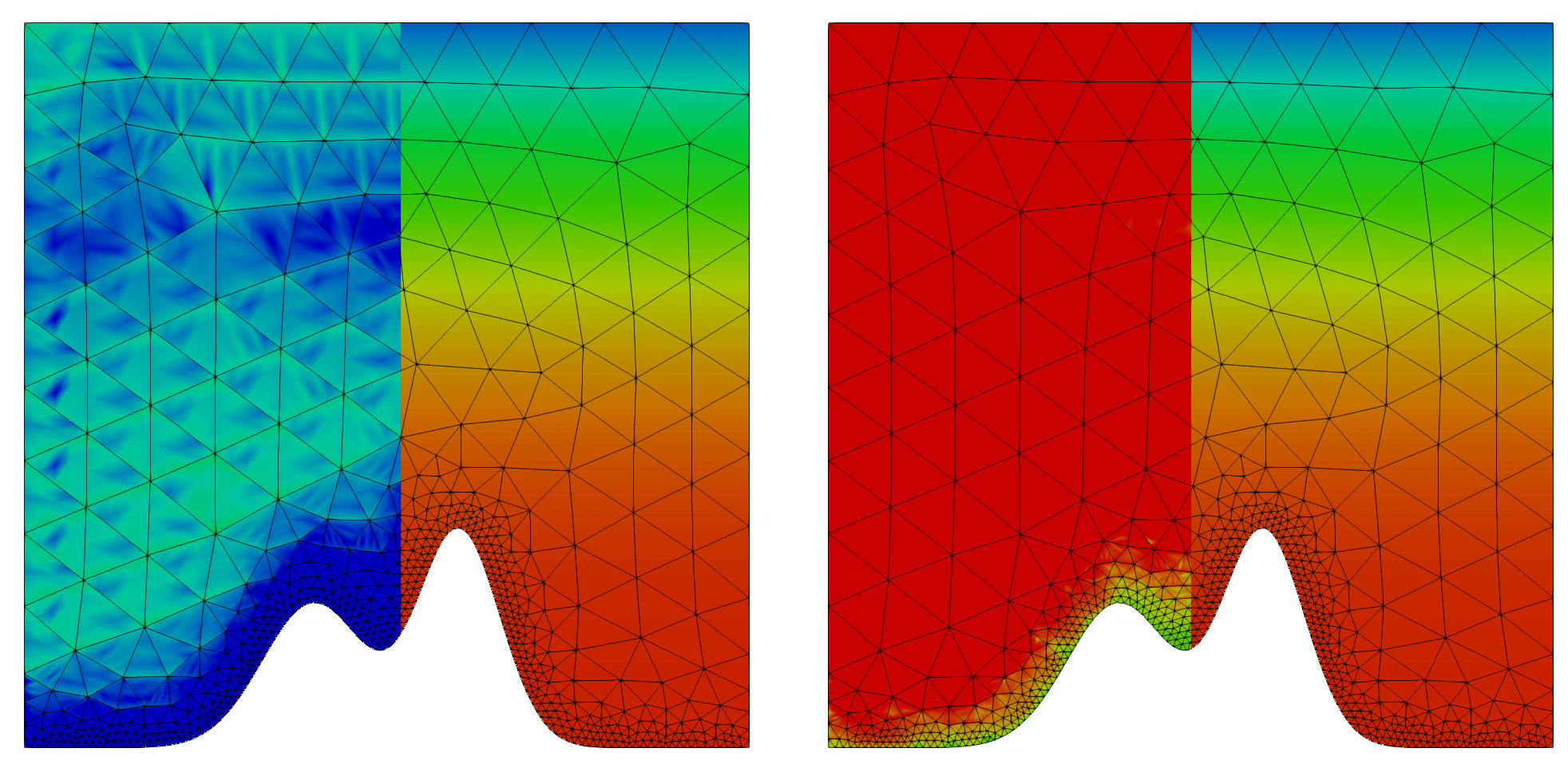}
    \hspace{0.5mm}
    \includegraphics[height=4cm]{./picture/scale_rho_hdiv_hdg.png}
\end{center}
    \caption{Absolute value of the velocity $|\uhdg|$ (using a
    logarithmic scale) plotted on the left side of each plot, and
    density $\varrhohdg$ plotted on the right side of each plot, for
    $k = 3$ for the mountain example with $\nu = 1$ and $c_M = 1$
    (left) and $\nu = 10^{-6}$ and $c_M = 1$ (right) and the right
    hand side according to \eqref{eq::allf_mountain}. }
    \label{fig::mountain_pic_hdg_allf}
\end{figure}

\subsection{A non-hydrostatic well-balanced state}

Gradient-robustness is also relevant, or possibly even more relevant, for the compressible Navier--Stokes problem
\begin{align}\label{eqn:compressible_NSE}
    - \nu \Delta \vecb{u} + \varrho (\vecb{u} \cdot \nabla) \vecb{u} + \nabla p(\varrho) = 0,
    \quad \text{and} \quad
    \mathrm{div}(\varrho \vecb{u}) = 0.
\end{align}
Here, the additional convection term balances the pressure gradient
at least in the limit $\nu \rightarrow 0$ and
non-hydrostatic well-balanced states even in absence of a
gravity term are possible.
A simple example, for any $\nu \geq 0$, can be constructed as follows.
Consider the (divergence-free and harmonic) velocity field
$\vecb{u}(x,y) := (-y,x)^T$
with the convection term
\begin{align*}
    \varrho (\vecb{u} \cdot \nabla) \vecb{u}
    = - \varrho \begin{pmatrix} x \\ y \end{pmatrix}
    = - \varrho \frac{1}{2}\nabla \left(x^2 + y^2\right).
\end{align*}
This term can be balanced by the density $\varrho := \varrho_0 \exp((x^2+y^2)/(2\cM))$, since
\begin{align*}
    \nabla(p(\varrho)) = \cM \varrho \nabla (\log \varrho) = \varrho \frac{1}{2}\nabla \left(x^2 + y^2\right).
\end{align*}
Moreover, the momentum $\varrho\vecb{u}$ is indeed divergence-free due to
\begin{align*}
    \mathrm{div} (\varrho \vecb{u})
    = \nabla \varrho \cdot \vecb{u} + \varrho \mathrm{div} \vecb{u}
    = \frac{\varrho}{\cM} \begin{pmatrix} x \\ y \end{pmatrix} \cdot \begin{pmatrix} -y \\ x \end{pmatrix}  + 0
    = 0.
\end{align*}
Hence, $(\vecb{u}, \varrho)$ is a solution of \eqref{eqn:compressible_NSE}.

To mimic this situation, we solve the compressible Stokes problem with
the right-hand side $\vecb{g}(x,y) := \nabla(x^2 + y^2) / 2$ and
consider as before the cases $\nu \in \lbrace 1,10^{-6} \rbrace$, $\cM \in \lbrace 1,100 \rbrace$ and $k \in
\lbrace 1,2 \rbrace$. Further note, that this example also requires
non-homogeneous boundary data which we prescribe as Dirichlet boundary
conditions for $\vecb{u}$ and, where $\vecb{u} \cdot \vecb{n}$ points
into the domain, as a boundary inflow term for $\varrho$ in the
continuity equation. We can make the following observations:

\begin{itemize}
    \item For the case $\nu = 1$, see Figure~\ref{fig::nvs_c1_n0} for
    $\cM = 1$ and Figure~\ref{fig::nvs_c100_n0} for $\cM = 100$ an
    interesting observation can be made for the lowest order
    approximation. Although the discrete $H^1$-error and the density
    error converge with optimal orders, the $L^2$-error of the
    velocity only gives a linear convergence rate. In contrast to that
    we see, that the quadratic approximation converges optimally, i.e.
    with a cubic rate for the $L^2$-error of the velocity and a
    quadratic rate for all other errors. 
    \item For the case of a vanishing viscosity $\nu = 10^{-6}$, see
    Figure~\ref{fig::nvs_c1_n6} for $\cM = 1$ and
    Figure~\ref{fig::nvs_c100_n6} for $\cM = 100$, we can make the
    same conclusions as for the previous examples. All errors converge
    with optimal order or show some pre-asymptotic faster convergence
    rate, and the gradient-robust solution $\vecb{u}_h^{div}$ provides
    a much better approximation. 
\end{itemize}

\begin{figure}
    \includegraphics[width=\textwidth]{./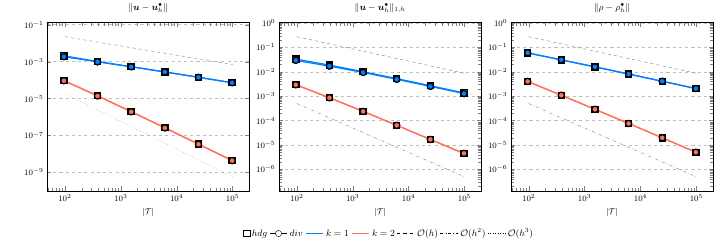}
    \caption{Convergence rates for $k \in \lbrace 1,2 \rbrace$ for the Navier--Stokes
    example with $\nu = 1$ and $c_M = 1$.}
    \label{fig::nvs_c1_n0}
\end{figure}

\begin{figure}
    \includegraphics[width=\textwidth]{./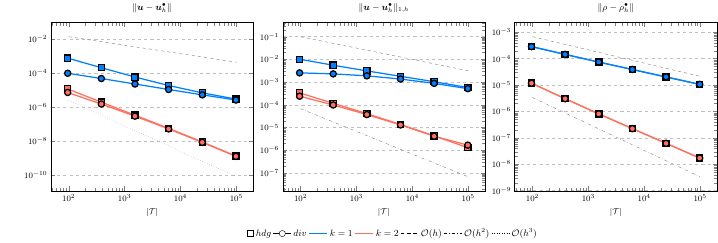}
    \caption{Convergence rates for $k \in \lbrace 1,2 \rbrace$ for the Navier--Stokes 
    example with $\nu = 1$ and $c_M = 100$. }
    \label{fig::nvs_c100_n0}
\end{figure}

\begin{figure}
    \includegraphics[width=\textwidth]{./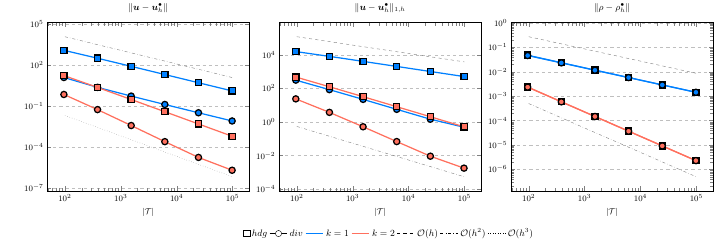}
    \caption{Convergence rates for $k \in \lbrace 1,2 \rbrace$ for the Navier--Stokes
    example with $\nu = 10^{-6}$ and $c_M = 1$. }
    \label{fig::nvs_c1_n6}
\end{figure}

\begin{figure}
    \includegraphics[width=\textwidth]{./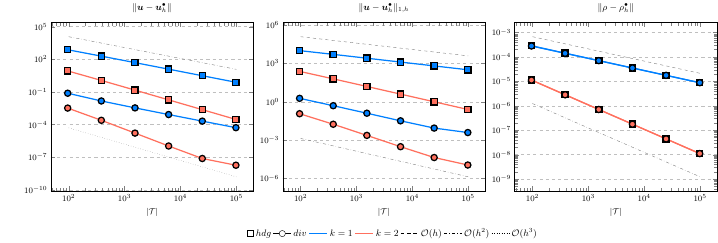}
    \caption{Convergence rates for $k \in \lbrace 1,2 \rbrace$ for the Navier--Stokes
    example with $\nu = 10^{-6}$ and $c_M = 100$. }
    \label{fig::nvs_c100_n6}
\end{figure}

\section{Acknowledgments}
This research was funded in part by the Austrian Science Fund (FWF)
[P35931-N] and the German Science Foundation (DFG) - Project number 467076359.
For the purpose of open access, the author has
applied a CC BY public copyright license to any Author Accepted
Manuscript version arising from this submission.

\bibliographystyle{plain}
\bibliography{lit}

\begin{thebibliography}{10}

\bibitem{akbas2020}
M.~Akbas, T.~Gallou\"{e}t, A.~Gassmann, A.~Linke, and C.~Merdon.
\newblock A gradient-robust well-balanced scheme for the compressible isothermal {S}tokes problem.
\newblock {\em Computer Methods in Applied Mechanics and Engineering}, 367:113069, 2020.

\bibitem{doi:10.1137/S1064827503431090}
Emmanuel Audusse, Fran\c{c}ois Bouchut, Marie-Odile Bristeau, Rupert Klein, and Beno\i{}t Perthame.
\newblock A fast and stable well-balanced scheme with hydrostatic reconstruction for shallow water flows.
\newblock {\em SIAM Journal on Scientific Computing}, 25(6):2050--2065, 2004.

\bibitem{berberich2020high}
Jonas~P. Berberich, Praveen Chandrashekar, and Christian Klingenberg.
\newblock High order well-balanced finite volume methods for multi-dimensional systems of hyperbolic balance laws, 2020.

\bibitem{brezzi}
Daniele Boffi, Franco Brezzi, and Michel Fortin.
\newblock {\em {Mixed Finite Element Methods and Applications}}.
\newblock Springer, Heidelberg, 2013.

\bibitem{Brenner:PFDG}
Susanne~C. Brenner.
\newblock Poincar{\'e}--friedrichs inequalities for piecewise {H1} functions.
\newblock {\em SIAM Journal on Numerical Analysis}, 41(1):306--324, 2003.

\bibitem{BrennerKorn2014}
Susanne~C. Brenner.
\newblock Korn's inequalities for piecewise {H1} vector fields.
\newblock {\em Mathematics of Computation}, 73(247):1067--1087, 2004.

\bibitem{zbMATH05837531}
B.~Cockburn, N.~C. Nguyen, and J.~Peraire.
\newblock A comparison of {HDG} methods for {Stokes} flow.
\newblock {\em J. Sci. Comput.}, 45(1-3):215--237, 2010.

\bibitem{cockburn2009unified}
Bernardo Cockburn, Jayadeep Gopalakrishnan, and Raytcho Lazarov.
\newblock Unified hybridization of discontinuous {Galerkin}, mixed, and continuous {Galerkin} methods for second order elliptic problems.
\newblock {\em {SIAM J. Numer. Anal.}}, 47(2):1319--1365, 2009.

\bibitem{MR2304270}
Bernardo Cockburn, Guido Kanschat, and Dominik Sch\"{o}tzau.
\newblock A note on discontinuous {G}alerkin divergence-free solutions of the {N}avier-{S}tokes equations.
\newblock {\em J. Sci. Comput.}, 31(1-2):61--73, 2007.

\bibitem{cotter:thuburn:2014}
C.~J. Cotter and J.~Thuburn.
\newblock A finite element exterior calculus framework for the rotating shallow-water equations.
\newblock {\em J. Comput. Phys.}, 257(part B):1506--1526, 2014.

\bibitem{di2011mathematical}
Daniele~Antonio Di~Pietro and Alexandre Ern.
\newblock {\em Mathematical aspects of discontinuous Galerkin methods}, volume~69.
\newblock Springer Science \& Business Media, Heidelberg, 2011.

\bibitem{FN:2013}
Richard~S. Falk and Michael Neilan.
\newblock Stokes complexes and the construction of stable finite elements with pointwise mass conservation.
\newblock {\em SIAM J. Numer. Anal.}, 51(2):1308--1326, 2013.

\bibitem{MR3749377}
Eduard Feireisl, M\'{a}ria Luk\'{a}\v{c}ov\'{a}-Medvi\v{d}ov\'{a}, \v{S}\'{a}rka Ne\v{c}asov\'{a}, Anton\'{\i}n Novotn\'{y}, and Bangwei She.
\newblock Asymptotic preserving error estimates for numerical solutions of compressible {N}avier-{S}tokes equations in the low {M}ach number regime.
\newblock {\em Multiscale Model. Simul.}, 16(1):150--183, 2018.

\bibitem{gal-09-conv}
T.~Gallou{\"e}t, R.~Herbin, and J.-C. Latch{\'e}.
\newblock A convergent finite element-finite volume scheme for the compressible {S}tokes problem. {P}art {I}: {T}he isothermal case.
\newblock {\em Math. Comp.}, 78(267):1333--1352, 2009.

\bibitem{GLS:2019}
Nicolas~R. Gauger, Alexander Linke, and Philipp~W. Schroeder.
\newblock On high-order pressure-robust space discretisations, their advantages for incompressible high {Reynolds} number generalised {Beltrami} flows and beyond.
\newblock {\em The SMAI journal of computational mathematics}, 5:89--129, 2019.

\bibitem{doi:10.1137/0733001}
J.~M. Greenberg and A.~Y. Leroux.
\newblock A well-balanced scheme for the numerical processing of source terms in hyperbolic equations.
\newblock {\em SIAM Journal on Numerical Analysis}, 33(1):1--16, 1996.

\bibitem{GROSHEINTZLAVAL2020109805}
L.~Grosheintz-Laval and R.~Käppeli.
\newblock Well-balanced finite volume schemes for nearly steady adiabatic flows.
\newblock {\em Journal of Computational Physics}, 423:109805, 2020.

\bibitem{GN:2014}
Johnny Guzm{\'a}n and Michael Neilan.
\newblock Conforming and divergence-free {S}tokes elements on general triangular meshes.
\newblock {\em Math. Comp.}, 83(285):15--36, 2014.

\bibitem{guzman:et:al:2017}
Johnny Guzm\'{a}n, Chi-Wang Shu, and Fil\'{a}nder~A. Sequeira.
\newblock {$\rm H(div)$} conforming and {DG} methods for incompressible {E}uler's equations.
\newblock {\em IMA J. Numer. Anal.}, 37(4):1733--1771, 2017.

\bibitem{JLMR2022}
Volker John, Xu~Li, Christian Merdon, and Hongxing Rui.
\newblock Inf-sup stabilized {S}cott--{V}ogelius pairs on general simplicial grids by {R}aviart--{T}homas enrichment.
\newblock {\em arXiv}, arXiv: 2206.01242, 2022.

\bibitem{JLMNR:sirev}
Volker John, Alexander Linke, Christian Merdon, Michael Neilan, and Leo~G. Rebholz.
\newblock On the divergence constraint in mixed finite element methods for incompressible flows.
\newblock {\em SIAM Rev.}, 59(3):492--544, 2017.

\bibitem{MR2931412}
Fumio Kikuchi.
\newblock Rellich-type discrete compactness for some discontinuous {G}alerkin {FEM}.
\newblock {\em Jpn. J. Ind. Appl. Math.}, 29(2):269--288, 2012.

\bibitem{MR4637970}
Lukas Kogler, Philip~L. Lederer, and Joachim Sch\"{o}berl.
\newblock A conforming auxiliary space preconditioner for the mass conserving stress-yielding method.
\newblock {\em Numer. Linear Algebra Appl.}, 30(5):Paper No. e2503, 31, 2023.

\bibitem{LLMS:2017}
P.~Lederer, A.~Linke, C.~Merdon, and J.~Sch\"oberl.
\newblock Divergence-free reconstruction operators for pressure-robust stokes discretizations with continuous pressure finite elements.
\newblock {\em SIAM Journal on Numerical Analysis}, 55(3):1291--1314, 2017.

\bibitem{MR3826676}
Philip~L. Lederer, Christoph Lehrenfeld, and Joachim Sch\"{o}berl.
\newblock Hybrid discontinuous {G}alerkin methods with relaxed {$H({\rm div})$}-conformity for incompressible flows. {P}art {I}.
\newblock {\em SIAM J. Numer. Anal.}, 56(4):2070--2094, 2018.

\bibitem{MR3867384}
Philip~L. Lederer and Joachim Sch\"{o}berl.
\newblock Polynomial robust stability analysis for {H(div)}-conforming finite elements for the {S}tokes equations.
\newblock {\em IMA J. Numer. Anal.}, 38(4):1832--1860, 2018.

\bibitem{https://doi.org/10.48550/arxiv.2206.14610}
Philip~L. Lederer and Rolf Stenberg.
\newblock Analysis of weakly symmetric mixed finite elements for elasticity.
\newblock {\em (to appear) Math. Comp.}, 2023.

\bibitem{L_MTH_2010}
Christoph Lehrenfeld.
\newblock {Hybrid Discontinuous Galerkin Methods for Incompressible Flow Problems}.
\newblock Master's thesis, RWTH Aachen, May 2010.

\bibitem{cmame:linke:merdon:2016}
A.~Linke and C.~Merdon.
\newblock Pressure-robustness and discrete {H}elmholtz projectors in mixed finite element methods for the incompressible {N}avier--{S}tokes equations.
\newblock {\em Comput. Methods Appl. Mech. Engrg.}, 311:304--326, 2016.

\bibitem{Linke:2012}
Alexander Linke.
\newblock A divergence-free velocity reconstruction for incompressible flows.
\newblock {\em C. R. Math. Acad. Sci. Paris}, 350(17-18):837--840, 2012.

\bibitem{LMT:2016}
Alexander Linke, Gunar Matthies, and Lutz Tobiska.
\newblock Robust arbitrary order mixed finite element methods for the incompressible stokes equations with pressure independent velocity errors.
\newblock {\em ESAIM: M2AN}, 50(1):289--309, 2016.

\bibitem{MaoXue23}
Shipeng Mao and Wendong Xue.
\newblock Convergence and error estimates of a mixed discontinuous galerkin-finite element method for the semi-stationary compressible stokes system.
\newblock {\em Journal of Scientific Computing}, 94(3):1573--7691, 2023.

\bibitem{MICHELDANSAC2016568}
Victor Michel-Dansac, Christophe Berthon, Stéphane Clain, and Françoise Foucher.
\newblock A well-balanced scheme for the shallow-water equations with topography.
\newblock {\em Computers \& Mathematics with Applications}, 72(3):568--593, 2016.

\bibitem{CiCP-30-666}
Jonas P.~Berberich, Roger K{\"a}ppeli, Praveen Chandrashekar, and Christian Klingenberg.
\newblock High order discretely well-balanced methods for arbitrary hydrostatic atmospheres.
\newblock {\em Communications in Computational Physics}, 30(3):666--708, 2021.

\bibitem{MR3833698}
Sander Rhebergen and Garth~N. Wells.
\newblock A hybridizable discontinuous {G}alerkin method for the {N}avier-{S}tokes equations with pointwise divergence-free velocity field.
\newblock {\em J. Sci. Comput.}, 76(3):1484--1501, 2018.

\bibitem{MR3780790}
Philipp~W. Schroeder and Gert Lube.
\newblock Divergence-free {$H({\rm div})$}-{FEM} for time-dependent incompressible flows with applications to high {R}eynolds number vortex dynamics.
\newblock {\em J. Sci. Comput.}, 75(2):830--858, 2018.

\bibitem{SV:1983}
L.~R. Scott and M.~Vogelius.
\newblock Conforming finite element methods for incompressible and nearly incompressible continua.
\newblock In {\em Large-scale computations in fluid mechanics, {P}art 2 ({L}a {J}olla, {C}alif., 1983)}, volume~22 of {\em Lectures in Appl. Math.}, pages 221--244. Amer. Math. Soc., Providence, RI, 1985.

\end{thebibliography}
\end{document}